\begin{document}

\def\D{\displaystyle}
\newtheorem{theorem}{Theorem}[section]
\newtheorem{definition}[theorem]{Definition}
\newtheorem{lemma}[theorem]{Lemma}
\newtheorem{algorithm}[theorem]{Algorithm}
\newtheorem{example}[theorem]{Example}
\newtheorem{proposition}[theorem]{Proposition}
\renewcommand{\theequation}{\thesection. \arabic{equation}}

\begin{center}
\large{\bf Computation of Bivariate Characteristic Polynomials of Finitely Generated Modules over Weyl Algebras}

\bigskip
\normalsize
Christian D\"onch
\smallskip

Research Institute for Symbolic Computation, Johannes Kepler University
A-4040 Linz, Austria
\medskip

Alexander Levin\\
\smallskip
The Catholic University of America\\
Washington, D. C. 20064
\end{center}

\bigskip

\begin{abstract}In this paper we generalize the classical Gr\"obner basis technique to prove the existence and present a method of computation of a characteristic polynomial in two variables associated with a finitely generated module over a Weyl algebra. We also present corresponding algorithms and examples of the computation of such polynomials, which, in particular, illustrate the fact that a bivariate characteristic polynomial can contain some invariants that are not carried by the Bernstein dimension polynomial. We also obtain a generalization of our results on to the case of multivariate characteristic polynomials associated with arbitrary partition of the basic sets of indeterminates and derivations of a Weyl algebra.
\end{abstract}

\bigskip

\normalsize

\section{Introduction}

\medskip

The role of Hilbert polynomials in commutative algebra and algebraic geometry
is well known. In [2] I. Bernstein introduced an analog of Hilbert polynomial
for a finitely generated filtered module over a Weyl algebra and extended
the theory of multiplicity to the class of such modules. The results of this
study have found interesting analytical applications (many of them
are considered in  Bj\"ork's book [4]). In particular, they allowed
I. Bernstein [3] to prove the Gelfand's conjecture on meromorphic
extensions of functions $\Gamma_{f} (\lambda) = \int P^{\lambda}(x)f(x)dx$
of one complex variable $\lambda$ defined in the half-space
$Re(\lambda) > 0$ for any polynomial in $n$ real variables
$P(x) = P(x_{1},\dots, x_{n})$  and for any function
$f(x) = f(x_{1},\dots, x_{n})\in C_{0}^{\infty}({\bf R}^{n})$.

In what follows we prove the existence, determine invariants and outline
methods of computation of dimension polynomials in two variables
associated with the natural bifiltration of a finitely generated module
over a Weyl algebra $A_{n}(K)$. We show that
such polynomials not only characterize the Bernstein class of
left $A_{n}(K)$-modules, but also carry, in general, more
invariants than dimension polynomials introduced by Bernstein.

\section{Preliminaries}

Throughout the paper {\bf Z}, {\bf N} and {\bf Q} denote the sets of all integers, all non-negative integers and all rational numbers, respectively. As usual, ${\bf Q}[t]$ denotes the ring of polynomials in one variable $t$ with rational coefficients and $o(t^{n})$ denotes a polynomial from ${\bf Q}[t]$ of degree less than $n$. By a ring we always mean an associative ring with a unit. Every ring homomorphism is unitary (maps unit onto unit), every subring of a ring contains the unit of the ring. Unless otherwise indicated, by the module over a ring $R$ we mean a unitary left $R$-module.

In what follows we consider a Weyl algebra as an algebra of differential operators over a polynomial ring. More precisely, let $K$ be a field of
zero characteristic and $R = K[x_{1},\dots, x_{n}]$ a polynomial ring in $n$ variables $x_{1},\dots, x_{n}$ over $K$. Furthermore, let $\partial_{i}$ denote the operator of partial differentiation of the ring $R$ with respect to the variable $x_{i}$ ($i=1,\dots, n$) and let $A_{n}(K)$ denote the corresponding ring of differential operators over $R$. Then $A_{n}(K)$ is said to be a Weyl algebra in $n$ variables with coefficients from $K$.
It is clear that the $K$-algebra $A_{n}(K)$ is generated by the elements $x_{1},\dots, x_{n}, \partial_{1}, \dots, \partial_{n}$, $\partial_{i}\partial_{j} = \partial_{i}\partial_{j}$ and $\partial_{i}x_{j} = x_{j}\partial_{i}$ for any two different indices $i$ and $j$ ($1\leq i, j\leq n$), and $\partial_{i}x_{i} = x_{i}\partial_{i} + 1$ for $i=1,\dots, n$. (The last identity is a consequence of the product rule, if one considers actions of the operators $\partial_{i}x_{i}$ and $x_{i}\partial_{i}$ on the ring $R$: $(\partial_{i}x_{i})(P) = \partial_{i}(x_{i}P) =
(x_{i}\partial_{i})(P) + P$ for any $P\in R$.)

In what follows, multi-indices with non-negative integers are denoted by small Greek letters. Thus, monomials $x_{1}^{\alpha_{1}}\dots x_{n}^{\alpha_{n}}$ and $\partial_{1}^{\beta_{1}}\dots \partial_{n}^{\beta_{n}}$ are written as $x^{\alpha}$ and $\partial^{\beta}$, their total degrees $\alpha_{1}+\dots +\alpha_{n}$ and $\beta_{1}+\dots +\beta_{n}$ are denoted by $|\alpha|$ and $|\beta|$, respectively.

It is known (see [4, Chapter 1, Proposition 1.2]) that monomials $x^{\alpha}\partial^{\beta}$ ($\alpha, \beta \in {\bf N}^{n}$) form a basis
of $A_{n}(K)$ over the field $K$, so that every element $D\in A_{n}(K)$ can be written in a unique way as a finite sum $\D\sum_{\alpha, \beta} k_{\alpha \beta}x^{\alpha}\partial^{\beta}$ with the coefficients

\smallskip

\noindent$k_{\alpha \beta}\in K$. The number $ord\, D = \max \{|\alpha|+|\beta| | k_{\alpha \beta}\neq 0\}$ is called the order of the element $D$.

Since $ord\,(D_{1}D_{2}) = ord\,D_{1} + ord\,D_{2}$ for any $D_{1}, D_{2}\in A_{n}(K)$, the Weyl algebra $A_{n}(K)$ can be considered as a filtered ring with the nondecreasing filtration $(W_{r})_{r\in {\bf Z}}$ where $W_{r} = \{D\in A_{n}(K) | ord\,D\leq r\}$ for $r\in {\bf N}$ and $W_{r} = 0$, if $r<0$.

If $M$ is a finitely generated left $A_{n}(K)$-module with a system of generators $g_{1},\dots, g_{p}$, then $M$ can be naturally considered as
a filtered $A_{n}(K)$-module with the filtration $(M_{r})_{r\in {\bf Z}}$ where  $M_{r} = \sum_{i=1}^{p}W_{r}g_{i}$ for $r \in {\bf Z}$. It is clear
that each $M_{r}$ is a finitely generated vector $K$-space, $W_{r}M_{s} = M_{r+s}$ for all $r, s\in {\bf N}$, and $\bigcup_{r\in {\bf N}}M_{r} = M$.

The following statement is proved in [2] (see also {[4, Chapter 1, Corollaries 3.3, 3.5, and Theorem 4.1]).

\medskip

\begin{proposition}
With the above notation, there exists a polynomial $\psi_{M}(t)\in {\bf Q}[t]$ with the following properties.

{\em (i)}\, $\psi_{M}(r) = dim_{K}M_{r}$ for all sufficiently large $r\in {\bf Z}$ (i. e., there exists $r_{0}\in {\bf Z}$ such that the last equality holds
for all integers $r\geq r_{0}$);

{\em (ii)}\, $n\leq deg\,\psi(t)\leq 2n$;

{\em (iii)}\, If $\psi(t) = a_{d}t^{d} +\dots + a_{1}t + a_{0}$ ($a_{d},\dots, a_{1}, a_{0}\in {\bf Q}$), then the degree $d$ of the polynomial $\psi(t)$ and the integer $d!a_{d}$ do not depend on the choice of the system of generators $g_{1},\dots, g_{p}$ of $M$. These numbers are denoted by $d(M)$ and $e(M)$, they are called the Bernstein dimension and multiplicity of the module $M$, respectively. \hspace{4in}\framebox[0.05in][l]{}
\end{proposition}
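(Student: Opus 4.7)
The plan is to reduce to classical Hilbert polynomial theory via the associated graded construction. First I would form the associated graded ring $\mathrm{gr}\,A_n(K)=\bigoplus_{r\ge 0}W_r/W_{r-1}$. Since $\partial_i x_i - x_i\partial_i = 1\in W_1$ while $\partial_i x_i$ and $x_i\partial_i$ both lie in $W_2$, every commutator of the generating symbols strictly drops order, so in $\mathrm{gr}\,A_n(K)$ the images of $x_i$ and $\partial_j$ all commute. Combined with the $K$-basis $\{x^{\alpha}\partial^{\beta}\}$ of $A_n(K)$, this identifies $\mathrm{gr}\,A_n(K)$ with the commutative polynomial ring $K[\bar x_1,\dots,\bar x_n,\bar\partial_1,\dots,\bar\partial_n]$ in $2n$ variables, graded so that each generator has degree one.

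Next, $\mathrm{gr}\,M=\bigoplus_{r\ge 0}M_r/M_{r-1}$ is a finitely generated graded module over $\mathrm{gr}\,A_n(K)$, generated in degree zero by the images of $g_1,\dots,g_p$. By the classical Hilbert--Serre theorem there exists $\chi(t)\in \mathbf{Q}[t]$ of degree at most $2n-1$ such that $\chi(r)=\dim_K(M_r/M_{r-1})$ for all sufficiently large $r$. Because $\dim_K M_r = \sum_{s=0}^{r}\dim_K(M_s/M_{s-1})$, summation yields a polynomial $\psi_M(t)\in \mathbf{Q}[t]$ of degree at most $2n$ with $\psi_M(r)=\dim_K M_r$ for $r\gg 0$. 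This delivers (i) and the upper bound in (ii).

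The lower bound $\deg\psi_M\ge n$ in (ii) is the Bernstein inequality and is the main obstacle. The standard route is a dimension count: if $\deg\psi_M=d<n$, then $\dim_K W_r$ is a polynomial in $r$ of degree $2n$, whereas $\dim_K\mathrm{Hom}_K(M_r,M_{2r})=\psi_M(r)\psi_M(2r)$ has degree $2d<2n$. Hence for large $r$ the $K$-linear map $W_r\to\mathrm{Hom}_K(M_r,M_{2r})$ induced by left multiplication has a nonzero kernel, yielding some $0\ne D\in W_r$ with $D M_r=0$. A supplementary argument exploiting the simplicity of $A_n(K)$ then promotes this to a nonzero element of $A_n(K)$ annihilating all of $M$, contradicting $M\ne 0$; I would invoke the detailed execution in Bernstein's paper and in [4, Ch.~1].

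Finally for (iii), given any other system of generators $g'_1,\dots,g'_q$ with associated filtration $(M'_r)$, each $g_i$ lies in some $M'_c$ and each $g'_j$ in some $M_c$ for a common constant $c\in\mathbf{N}$, whence $M_r\subseteq M'_{r+c}$ and $M'_r\subseteq M_{r+c}$ for every $r$. This forces $\psi_M(r)\le \psi'_M(r+c)$ and $\psi'_M(r)\le \psi_M(r+c)$ for $r\gg 0$, and comparing leading asymptotics forces equality of degrees and of leading coefficients. Since $\psi_M$ is integer-valued, its leading coefficient times $d!$ is automatically an integer, and the symmetry just established shows this integer depends only on $M$, so it defines the invariants $d(M)$ and $e(M)$.
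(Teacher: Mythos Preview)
The paper does not give its own proof of this proposition: it is stated with a terminal box and attributed to Bernstein [2] and Bj\"ork [4, Ch.~1]. Your outline is essentially the standard argument one finds in those references---pass to the associated graded ring (a commutative polynomial ring in $2n$ variables), invoke Hilbert--Serre for existence and the upper bound $2n$, use Bernstein's inequality for the lower bound $n$, and compare two natural filtrations shifted by a constant for the invariance in (iii). So there is nothing to contrast on the level of strategy.

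One remark on your sketch of the Bernstein inequality. The phrase ``promotes this to a nonzero element of $A_{n}(K)$ annihilating all of $M$'' misdescribes the actual mechanism. What Bj\"ork does is prove directly, by induction on $r$, that the map $W_{r}\to \mathrm{Hom}_{K}(M_{r},M_{2r})$ is injective: if $0\neq D\in W_{r}$ satisfies $DM_{r}=0$, then each commutator $[x_{i},D]$ and $[\partial_{i},D]$ lies in $W_{r-1}$ and annihilates $M_{r-1}$, so by the inductive hypothesis all these commutators vanish; hence $D$ lies in the center $K$ of $A_{n}(K)$, and then $DM_{r}=0$ with $M_{r}\supseteq M_{0}\neq 0$ forces $D=0$. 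This yields $\dim_{K}W_{r}\leq \psi_{M}(r)\psi_{M}(2r)$ and hence $\deg\psi_{M}\geq n$, without ever producing a global annihilator of $M$. Since you explicitly defer this step to [2] and [4], the imprecision is harmless, but it is worth having the correct picture.
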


\medskip

The polynomial $\psi_{M}(t)$ is called the {\em Bernstein polynomial\/} of the $A_{n}(K)$-module $M$ associated with the given system of generators.
The family of all finitely generated left $A_{n}(K)$-modules $M$ such that $d(M) = n$ is denoted by $\mathcal{B}_{n}$, it is called the
{\em Bernstein class\/} of $A_{n}(K)$-modules.

The following statement (see [4, Chapter 1, Proposition 5.3 and Theorem 5.4]) gives some properties of the Bernstein class.

\begin{proposition}

{\em (i)}\, If $0\rightarrow M_{1}\rightarrow M_{2}\rightarrow M_{3}\rightarrow 0$
is an exact sequence of left $A_{n}(K)$-modules, then
$M_{2}\in \mathcal{B}_{n}$ if and only if $M_{1}\in \mathcal{B}_{n}$ and
$M_{3}\in \mathcal{B}_{n}$.

{\em (ii)}\, If $M\in \mathcal{B}_{n}$, then $M$ has a finite length as a left
$A_{n}(K)$-module. In fact, every strictly increasing sequence of
$A_{n}(K)$-modules contains at most $e(M)$ terms.

{\em (iii)}\, If $M$ is any filtered $A_{n}(K)$-module with an increasing
filtration $(M_{r})_{r\in {\bf Z}}$ and there exist positive integers
$a$ and $b$ such that $dim_{K}M_{r}\leq ar^{n} + b(r+1)^{n-1}$ for all
$r\in {\bf N}$, then $M\in \mathcal{B}_{n}$ and $e(M)\leq n!a$.
\hspace{2in}\framebox[0.05in][l]{}
\end{proposition}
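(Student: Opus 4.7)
The plan is to derive (i) from additivity of dimension polynomials along short exact sequences, then use (i) to get (ii) by induction on the length of a chain, and finally prove (iii) by direct comparison of the asymptotic growth of $\dim_K M_r$ with the polynomial $\psi_M(t)$.

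For (i), I would first pick generators $g_1,\dots,g_p$ of $M_2$ whose images generate $M_3$, endow $M_3$ with the image filtration $((M_2)_r+M_1)/M_1$ and $M_1$ with the induced filtration $M_1\cap (M_2)_r$. These are precisely the filtrations needed so that one has, for each $r\in{\bf N}$, a short exact sequence of finite-dimensional $K$-spaces
\[
0\longrightarrow M_1\cap (M_2)_r\longrightarrow (M_2)_r\longrightarrow \bigl((M_2)_r+M_1\bigr)/M_1\longrightarrow 0,
\]
which gives $\dim_K(M_2)_r=\dim_K(M_1)_r+\dim_K(M_3)_r$ for $r\gg 0$. By Proposition 2.1 all three dimension functions agree with polynomials of degree between $n$ and $2n$ for large $r$, so $\psi_{M_2}=\psi_{M_1}+\psi_{M_3}$. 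Since each of the three degrees lies in $[n,2n]$ and their sum has the maximum degree among them, $\deg\psi_{M_2}=n$ if and only if $\deg\psi_{M_1}=\deg\psi_{M_3}=n$. A subtle point I would need to address is that the induced filtration on $M_1$ is not \emph{a priori} one of the standard filtrations coming from a generating set; I would appeal here to the standard fact (implicit in Proposition 2.1(iii)) that the degree and leading coefficient of the dimension polynomial depend only on the equivalence class of good filtrations, not the particular choice of generators, so they are well-defined invariants of $M_1$ and $M_3$. Multiplicities then add: $e(M_2)=e(M_1)+e(M_3)$ whenever all three lie in $\mathcal{B}_n$.

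For (ii), let $0=N_0\subsetneq N_1\subsetneq\cdots\subsetneq N_k=N\subseteq M$ be a strictly ascending chain of submodules of $M\in\mathcal{B}_n$. Applied inductively to the exact sequences $0\to N_{i-1}\to N_i\to N_i/N_{i-1}\to 0$ and $0\to N\to M\to M/N\to 0$, part (i) yields $N_i/N_{i-1}\in\mathcal{B}_n$ for each $i$ together with $e(M)=\sum_{i=1}^k e(N_i/N_{i-1})+e(M/N)$. Since each quotient $N_i/N_{i-1}$ is a nonzero module in $\mathcal{B}_n$, its multiplicity is a positive integer (the leading coefficient of a dimension polynomial of a nonzero module in $\mathcal{B}_n$ is positive), so $e(M)\geq k$, proving the length bound.

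For (iii), the hypothesis $\dim_K M_r\leq ar^n+b(r+1)^{n-1}$ forces $\psi_M(r)\leq ar^n+b(r+1)^{n-1}$ for all $r\gg 0$. Comparing leading terms, this is incompatible with $\deg\psi_M>n$, so $\deg\psi_M\leq n$; combined with the lower bound $\deg\psi_M\geq n$ from Proposition 2.1(ii) we obtain $d(M)=n$, i.e.\ $M\in\mathcal{B}_n$. Writing $\psi_M(t)=\frac{e(M)}{n!}t^n+o(t^n)$ and letting $r\to\infty$ in the inequality $\frac{e(M)}{n!}r^n+o(r^n)\leq ar^n+b(r+1)^{n-1}$ yields $e(M)\leq n!a$.

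The main obstacle I expect is the technical point in (i) that the induced/image filtrations produce the \emph{correct} invariants $d$ and $e$ for $M_1$ and $M_3$; the rest is comparison of polynomial growth.
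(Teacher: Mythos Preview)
The paper does not prove this proposition; it quotes it from Bj\"ork [4, Chapter~1, Proposition~5.3 and Theorem~5.4] and simply marks it with an end-of-proof box. So there is no ``paper's own proof'' to compare your proposal against, and I can only comment on the soundness of your outline.

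Your arguments for (i) and (ii) are the standard ones and are essentially correct. The one technical point you flag---that the induced filtration on $M_{1}$ is a \emph{good} filtration---is indeed the crux; it holds because $gr\,A_{n}(K)$ is a Noetherian polynomial ring, so an Artin--Rees type argument applies, after which the invariance statement of Proposition~2.1(iii) lets you read off $d(M_{1})$ and $e(M_{1})$ from that filtration. The additivity $e(M_{2})=e(M_{1})+e(M_{3})$ then gives (ii) exactly as you describe.

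There is, however, a genuine gap in your treatment of (iii). The hypothesis concerns an \emph{arbitrary} increasing filtration $(M_{r})$, not the standard filtration coming from a finite generating set, so the bound $\dim_{K}M_{r}\leq ar^{n}+b(r+1)^{n-1}$ does not directly yield $\psi_{M}(r)\leq ar^{n}+b(r+1)^{n-1}$. Indeed, nothing in the hypothesis tells you that $M$ is finitely generated, so $\psi_{M}$ is not even defined at the point where you invoke it. The missing step (this is what Bj\"ork actually does) is to show first that the dimension bound forces the given filtration to be \emph{good}: one uses the key inequality behind Bernstein's inequality, applied to the given filtration, to see that $W_{1}M_{r}=M_{r+1}$ for all sufficiently large $r$. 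From goodness it follows that $M$ is finitely generated (by any $M_{r_{0}}$ with $r_{0}$ large) and that the given filtration computes the same $d(M)$ and $e(M)$ as a standard one. Only after this can you legitimately compare leading coefficients to conclude $d(M)=n$ and $e(M)\leq n!\,a$.
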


\section {Numerical polynomials in two variables}

\begin{definition}
A polynomial $f(t_{1}, t_{2})$ in two variables $t_{1}$ and $t_{2}$ with
rational coefficients is called numerical if $f(t_{1}, t_{2})\in {\bf Z}$
for all sufficiently large $t_{1}, t_{2}\in{\bf Z}$, i.e., there exists
an element $(r_{0}, s_{0})\in {\bf Z}^2$ such that  $f(r, s)\in {\bf Z}$
for all integers $r\geq r_{0},\, s\geq s_{0}$.
\end{definition}

\medskip

It is clear that every polynomial in two variables with integer coefficients
is numerical.  As an example of a numerical polynomial in two variables with
noninteger coefficients one can consider a polynomial
$\D{t_{1}\choose m}{t_{2}\choose n}$, where $m$ and $n$ are positive integers
at least one of which is greater than 1. (As usual, for any $k\in {\bf Z},
k\geq 1$, $\D{t\choose k}$ denotes the polynomial
$\D{t\choose k} =\D\frac{t(t-1)\dots (t-k+1)}{k!}$ in one variable $t$;
furthermore, we set
$\D{t\choose0} = 1$, and $\D{t\choose k} = 0$ if $k$ is a negative integer).

By the degree of a monomial $u=t_{1}^{i}t_{2}^{j}$
we mean its total degree $deg\, u = i + j$, and the degrees of $u$
relative to $t_{1}$ and $t_{2}$ are defined as $deg_{t_{1}}\, u = i$ and
$deg_{t_{2}}\, u = j$, respectively.
If $f(t_{1}, t_{2}) = a_{1}u_{1} + \dots + a_{k}u_{k}$ is a
representation of a numerical polynomial $f(t_{1}, t_{2})$ as a sum of
monomials $u_{1}, \dots, u_{k}$ with nonzero coefficients
$a_{1}, \dots, a_{k}$, then the degree of $f(t_{1}, t_{2})$ and the degree
of this polynomial relative to $t_{i}$ ($i = 1, 2$) are defined as
usual: $deg\, f = \max \{deg\, u_{i}|1\leq i\leq k\}$ and
$deg_{t_{i}}\, f = \max \{deg_{t_{i}}\, u_{i}|1\leq i\leq k\}$,
respectively.

The following proposition proved in [9] gives a ''canonical'' representation
of a numerical polynomial in two variables.

\begin{proposition}
Let $f(t_{1}, t_{2})$ be a numerical polynomial
in two variables $t_{1}$, $t_{2}$, and let $deg_{t_{1}}\, f = p$,
$deg_{t_{2}}\, f = q$ . Then the
polynomial $f(t_{1}, t_{2})$ can be represented in the form
$$\hspace{1in}f(t_{1}, t_{2}) = \sum_{i=0}^{p}\sum_{j=0}^{q}
{a_{ij}{t_{1}+i\choose i}{t_{2}+j\choose j}}\hspace{1in}(3.1)$$
\noindent with integer coefficients $a_{ij}$
($0\leq i\leq p,\, 0\leq j\leq q$)
that are uniquely defined by the polynomial  $f(t_{1}, t_{2})$.
\end{proposition}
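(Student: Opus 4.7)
The approach is to first establish existence and uniqueness of a representation with \emph{rational} coefficients $a_{ij}$, and then to upgrade these coefficients to integers by reducing to the univariate case.

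For rational existence and uniqueness, I would observe that $\binom{t+i}{i}$ is a polynomial of exact degree $i$ in $t$, so the family $\{\binom{t+i}{i}\}_{0\le i\le p}$ is a $\mathbb{Q}$-basis of the space of polynomials in $\mathbb{Q}[t]$ of degree at most $p$. Multiplying univariate bases, $\{\binom{t_1+i}{i}\binom{t_2+j}{j}\}_{0\le i\le p,\,0\le j\le q}$ is a $\mathbb{Q}$-basis of the $(p+1)(q+1)$-dimensional space of polynomials in $\mathbb{Q}[t_1,t_2]$ with $\deg_{t_1}\le p$ and $\deg_{t_2}\le q$. This forces a unique expansion of $f(t_1,t_2)$ in the form (3.1) with rational $a_{ij}$.

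To show $a_{ij}\in\mathbb{Z}$, the plan is first to settle the one-variable analog: if $g(t)=\sum_{i=0}^{d}c_{i}\binom{t+i}{i}$ is numerical of degree $d$, then each $c_{i}\in\mathbb{Z}$. Working with the difference operator $\Delta g(t)=g(t+1)-g(t)$, a double application of Pascal's rule gives $\Delta^{k}\binom{t+i}{i}=\binom{t+i}{i-k}$, so $\Delta^{d}g$ is the constant $c_{d}$. On the other hand, $\Delta^{d}g(r)=\sum_{k=0}^{d}(-1)^{d-k}\binom{d}{k}g(r+k)$ is an integer combination of values of $g$, which are integers once $r$ is sufficiently large; hence $c_{d}\in\mathbb{Z}$. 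Subtracting $c_{d}\binom{t+d}{d}$ lowers the degree and an induction completes the univariate case.

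I would then promote this to two variables by freezing one coordinate. Fix any sufficiently large integer $s$; then $f(t_{1},s)$ is a numerical polynomial of degree $\le p$ in $t_{1}$, so the univariate result yields $f(t_{1},s)=\sum_{i=0}^{p}d_{i}(s)\binom{t_{1}+i}{i}$ with $d_{i}(s)\in\mathbb{Z}$. Substituting $t_{2}=s$ into (3.1) and invoking uniqueness of the univariate expansion identifies $d_{i}(s)=\sum_{j=0}^{q}a_{ij}\binom{s+j}{j}$, a rational polynomial in $s$ that is integer-valued for all large integers $s$, hence numerical of degree $\le q$ in $s$. A second application of the univariate result then gives $a_{ij}\in\mathbb{Z}$ for every $(i,j)$. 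The main obstacle is the univariate lemma itself, since the preceding text does not state it; once the finite-difference argument above is in hand, the bivariate statement reduces to careful bookkeeping.
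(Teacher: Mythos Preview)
Your argument is correct and self-contained. Note, however, that the paper does not supply its own proof of this proposition: it simply states that the result is ``proved in [9]'' and gives no further argument. There is therefore nothing in the paper to compare your approach against. Your route---rational basis expansion, the classical finite-difference proof of integrality in one variable, then freezing one coordinate to bootstrap to two variables---is the standard one and is exactly the kind of argument one finds in the cited literature (e.g., [9] or [10, Chapter~II]). The only minor remark is that the univariate lemma you rely on is itself well known (it is essentially the one-variable case of the same proposition, or equivalently the classical fact that integer-valued polynomials are $\mathbb{Z}$-combinations of binomial coefficients), so calling it ``the main obstacle'' slightly overstates the difficulty; your difference-operator computation handles it cleanly.
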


In what follows (until the end of the section), we deal with subsets of
the set ${\bf N}^{m+n}$ where $m$ and $n$ are
positive integers. If $A\subseteq{\bf N}^{m+n}$, then $A(r,s)$
($r, s\in {\bf N}$) will denote the subset of $A$ that consists of all
$(m+n)$-tuples $(a_{1},\dots , a_{m+n})$ such that
$a_{1}+ \dots +a_{m}\leq r$ and $a_{m+1}+ \dots +a_{m+n}\leq s$.
Furthermore, $V_{A}$ will denote the set
$\{v=(v_{1}, \dots , v_{m+n})\in {\bf N}^{m+n}|v$ is not greater than or
equal to any element of $A$ with respect to the product order on
${\bf N}^{m+n}\}$. (Recall that the product order on the set ${\bf N}^{k}$
($k\in {\bf N}, k\geq 1$) is a partial order $\leq_P$ on ${\bf N}^{k}$ such
that $(c_{1}, \dots , c_{k})\leq_P (c'_{1}, \dots , c'_{k})$ if and only if
$c_{i}\leq c'_{i}$ for all $i=1, \dots , k$.) Clearly, an element
$v=(v_{1}, \dots , v_{m+n})\in {\bf N}^{m+n}$ belongs to $V_{A}$ if
and only if for any element  $(a_{1},\dots , a_{m+n})\in A$
there exists $i\in {\bf N}, \, 1\leq i\leq m+n$, such that $a_{i} > v_{i}$.

The following two statements proved in
[10, Chapter II, Theorem 2.2.5 and Proposition 2.2.11]
generalize the well-known
Kolchin's result on numerical polynomials associated with subsets of
${\bf N}$ (see [8, Chapter 0, Lemma 17]) and give the explicit
formula for the numerical polynomials in two variables associated with a
finite subset of ${\bf N}^{m+n}$ ($m$ and $n$ are fixed positive integers).

\medskip

\begin{proposition}
With the above notation, for any set $A\subseteq{\bf N}^{m+n}$, there exists
a numerical polynomial $\omega_{A}(t_{1}, t_{2})$ in two
variables $t_{1}, t_{p}$ such that

\smallskip

{\em (i)}\, $\omega_{A}(r, s) = Card\,V_{A}(r,s)$ for
all sufficiently large $r, s\in {\bf N}$
(as usual, $Card \, V$ denotes the number of elements of a finite set $V$);

\smallskip

{\em (ii)}\, $deg\,\omega \leq m+n$,  $deg_{t_{1}}\omega \leq m$, and
$deg_{t_{2}}\omega \leq n$;

\smallskip

{\em (iii)}\, $deg\,\omega = m + n$ if and only if the set $A$ is empty,
in this case

$\omega_{A}(t_{1}, t_{2}) =
\D{{t_{1}+m}\choose m}{{t_{2}+n}\choose n}$;

\smallskip

{\em (iv)}\, $\omega_{A}(t_{1}, t_{2}) = 0$ if and only if
$(0, 0)\in A$.
\end{proposition}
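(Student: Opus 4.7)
My plan is to reduce to the case where $A$ is finite and then apply inclusion--exclusion. First, by Dickson's lemma the set $A^{\ast}$ of minimal elements of $A$ with respect to $\leq_{P}$ is finite; furthermore $V_{A}=V_{A^{\ast}}$, because $v\geq_{P}a$ for some $a\in A$ if and only if $v\geq_{P}\alpha$ for some $\alpha\in A^{\ast}$. If $A^{\ast}=\emptyset$, then $V_{A}(r,s)$ coincides with the set $N(r,s)=\{v\in{\bf N}^{m+n}:v_{1}+\dots+v_{m}\leq r,\;v_{m+1}+\dots+v_{m+n}\leq s\}$, whose cardinality is $\binom{r+m}{m}\binom{s+n}{n}$, and (i)--(iii) are immediate in this case.

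Now assume $A^{\ast}=\{\alpha^{(1)},\dots,\alpha^{(k)}\}$ with $k\geq 1$ and let $U_{j}=\{v\in{\bf N}^{m+n}:v\geq_{P}\alpha^{(j)}\}$. Since ${\bf N}^{m+n}\setminus V_{A^{\ast}}=\bigcup_{j}U_{j}$, inclusion--exclusion gives
\[
\operatorname{Card}V_{A}(r,s)=\binom{r+m}{m}\binom{s+n}{n}-\sum_{\emptyset\neq J\subseteq\{1,\dots,k\}}(-1)^{|J|+1}\operatorname{Card}(U_{J}\cap N(r,s)),
\]
where $U_{J}=\bigcap_{j\in J}U_{j}=\{v:v_{i}\geq\mu_{i}^{(J)},\,1\leq i\leq m+n\}$ with $\mu_{i}^{(J)}=\max_{j\in J}\alpha_{i}^{(j)}$. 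A shift-of-origin argument ($v_{i}\mapsto v_{i}-\mu_{i}^{(J)}$) shows that whenever $r\geq S_{1}^{(J)}:=\sum_{i=1}^{m}\mu_{i}^{(J)}$ and $s\geq S_{2}^{(J)}:=\sum_{i=m+1}^{m+n}\mu_{i}^{(J)}$, the term $\operatorname{Card}(U_{J}\cap N(r,s))$ equals $\binom{r-S_{1}^{(J)}+m}{m}\binom{s-S_{2}^{(J)}+n}{n}$. Taking $r,s$ past the finitely many thresholds and reading the resulting expression as a polynomial in $(t_{1},t_{2})$ defines $\omega_{A}(t_{1},t_{2})$ and establishes (i).

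Parts (ii)--(iv) are then read off from this closed form. For (ii), each summand has degree $m$ in $t_{1}$, degree $n$ in $t_{2}$, and total degree $m+n$. For (iii), the only monomial of total degree $m+n$ consistent with the partial-degree bounds is $t_{1}^{m}t_{2}^{n}$; its coefficient equals
\[
\frac{1}{m!\,n!}\Bigl(1-\sum_{\emptyset\neq J\subseteq\{1,\dots,k\}}(-1)^{|J|+1}\Bigr)=\frac{1}{m!\,n!}\sum_{J\subseteq\{1,\dots,k\}}(-1)^{|J|},
\]
which vanishes as soon as $k\geq 1$, so $\deg\omega_{A}=m+n$ forces $A^{\ast}=\emptyset$ and hence $A=\emptyset$. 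For (iv), if $(0,\dots,0)\in A$ then every $v\in{\bf N}^{m+n}$ dominates it, so $V_{A}=\emptyset$ and $\omega_{A}\equiv 0$; conversely, if $(0,\dots,0)\notin A$ then $(0,\dots,0)\in V_{A}\cap N(r,s)$ for all $r,s\in{\bf N}$, yielding $\omega_{A}(r,s)\geq 1$ for large $r,s$ and hence $\omega_{A}\not\equiv 0$. The main technical obstacle I anticipate is checking that $\operatorname{Card}(U_{J}\cap N(r,s))$ genuinely agrees with one polynomial in $(r,s)$ once $r,s$ exceed finitely many thresholds; the finiteness of $A^{\ast}$ supplied by Dickson's lemma is what makes this bookkeeping tractable.
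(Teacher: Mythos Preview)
Your proof is correct. The paper itself does not prove this proposition but cites an external reference ([10, Chapter~II, Theorem~2.2.5 and Proposition~2.2.11]); however, your inclusion--exclusion argument is the standard one, and in fact the closed form you obtain,
\[
\omega_{A}(t_{1},t_{2})=\sum_{J\subseteq\{1,\dots,k\}}(-1)^{|J|}\binom{t_{1}+m-S_{1}^{(J)}}{m}\binom{t_{2}+n-S_{2}^{(J)}}{n},
\]
is exactly the formula the paper records separately as Proposition~3.5. So you have effectively proved Propositions~3.3 and~3.5 simultaneously, along the same lines as the cited source. The only remark worth adding is that your reduction to the finite set $A^{\ast}$ via Dickson's lemma is precisely what the paper encodes in Lemma~3.6 (any infinite sequence in ${\bf N}^{n}\times{\bf N}_{p}$ contains a strictly $\leq_{P}$-increasing subsequence), so even that preliminary step matches the paper's toolkit.
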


\begin{definition}
The polynomial $\omega_{A}(t_{1},\dots, t_{p})$, whose existence is
established by Proposition {\em 3.3}, is called the $(m, n)$-dimension
polynomial of the set $A\subseteq {\bf N}^{m+n}$.
\end{definition}
\medskip

\begin{proposition}
Let $A = \{a_{1}, \dots, a_{p}\}$ be a finite subset of ${\bf N}^{m+n}$
($m$ and $n$ are fixed positive integers) and let
$a_{i} = (a_{i1}, \dots, a_{i,m+n})$ \, for $i=1,\dots, p$.
Furthermore, for any $l\in {\bf N}$, $0\leq l\leq p$, let $\Gamma (l,p)$
denote the set of all $l$-element subsets of the set
${\bf N}_{p} = \{1,\dots, p\}$, and for any $\sigma \in \Gamma (l,p)$
let $\bar{a}_{\sigma j} = \max \{a_{ij} | i\in \sigma\}$ ($1\leq j\leq m+n$),
$b_{\sigma} = \sum_{j=1}^{m}\bar{a}_{\sigma j}$, and
$c_{\sigma} = \sum_{j=m+1}^{m+n}\bar{a}_{\sigma j}$.
Then
$$\omega_{A}(t_{1},t_{2}) =
\sum_{l=0}^{p}(-1)^{l}\sum_{\sigma \in \Gamma (l,p)}
{t_{1}+m - b_{\sigma}\choose m}{t_{2} +n - c_{\sigma}\choose n}.$$
\end{proposition}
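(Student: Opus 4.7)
The plan is to apply the inclusion–exclusion principle to the complement of $V_{A}$ inside $\mathbf{N}^{m+n}$, reducing the count $\operatorname{Card} V_A(r,s)$ to a signed sum of counts over intersections of the ``upper sets'' determined by the $a_i$. For each $i=1,\dots,p$, let $U_i=\{v\in\mathbf{N}^{m+n}\mid v\geq_P a_i\}$; by the definition of $V_A$ we have $\mathbf{N}^{m+n}\setminus V_A=\bigcup_{i=1}^{p}U_i$, so restricting to the $(r,s)$-truncations gives
$$\operatorname{Card}V_A(r,s)=\operatorname{Card}\mathbf{N}^{m+n}(r,s)-\operatorname{Card}\Bigl(\bigcup_{i=1}^{p}U_i\Bigr)(r,s).$$

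Next, I would compute $\operatorname{Card}U_\sigma(r,s)$ for $U_\sigma=\bigcap_{i\in\sigma}U_i$: the condition $v\geq_P a_i$ for every $i\in\sigma$ is equivalent to $v_j\geq\bar a_{\sigma j}$ for $j=1,\dots,m+n$. Substituting $v_j=\bar a_{\sigma j}+w_j$ turns this into counting $w\in\mathbf{N}^{m+n}$ with $\sum_{j=1}^{m}w_j\leq r-b_\sigma$ and $\sum_{j=m+1}^{m+n}w_j\leq s-c_\sigma$. The standard stars-and-bars identity $\operatorname{Card}\{w\in\mathbf{N}^{k}\mid w_1+\dots+w_k\leq N\}=\binom{N+k}{k}$, applied in each block, yields
$$\operatorname{Card}U_\sigma(r,s)=\binom{r-b_\sigma+m}{m}\binom{s-c_\sigma+n}{n}$$
whenever $r\geq b_\sigma$ and $s\geq c_\sigma$ (the case $\sigma=\emptyset$ gives the ambient count, corresponding to Proposition 3.3(iii)).

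Combining these with inclusion–exclusion gives, for all sufficiently large $r,s\in\mathbf{N}$ (large enough that $r\geq b_\sigma$ and $s\geq c_\sigma$ for every $\sigma\subseteq\mathbf{N}_p$, which is possible since there are only finitely many $\sigma$),
$$\operatorname{Card}V_A(r,s)=\sum_{l=0}^{p}(-1)^{l}\sum_{\sigma\in\Gamma(l,p)}\binom{r+m-b_\sigma}{m}\binom{s+n-c_\sigma}{n}.$$
Since the right-hand side is a polynomial in $(r,s)$ that agrees with $\omega_A(r,s)$ for all sufficiently large integer values of $r$ and $s$, and such a polynomial is uniquely determined (by Proposition 3.1 applied in each variable, or more directly by the uniqueness of the representation in Proposition 3.1 extended to two variables), the asserted formula for $\omega_A(t_1,t_2)$ follows.

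The only subtle point is the bookkeeping: one must verify that for sufficiently large $(r,s)$ all the $\binom{r+m-b_\sigma}{m}\binom{s+n-c_\sigma}{n}$ coincide with the actual cardinalities $\operatorname{Card}U_\sigma(r,s)$ (so that no spurious boundary terms appear) and that inclusion–exclusion is indeed applied inside $\mathbf{N}^{m+n}(r,s)$ rather than in all of $\mathbf{N}^{m+n}$; both checks are routine once the substitution $v=\bar a_\sigma+w$ is in place, so no real obstacle is expected.
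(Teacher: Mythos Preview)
Your argument is correct: the inclusion--exclusion decomposition over the upper sets $U_{i}=\{v\geq_{P}a_{i}\}$, together with the stars-and-bars count of each truncated intersection $U_{\sigma}(r,s)$ after the shift $v=\bar a_{\sigma}+w$, is exactly the natural route to the stated formula, and your handling of the ``sufficiently large $(r,s)$'' threshold is adequate. Note, however, that the paper does not actually prove this proposition in the text; it merely quotes it from \cite{KLMP99} (reference [10], Chapter II, Proposition 2.2.11), so there is no in-paper proof to compare against---your write-up supplies precisely the argument that the cited source would give, and nothing further is needed.
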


Let ${\bf N}_{p}=\{1,\dots,p\}$ ($p\in {\bf Z}, p\geq 1$) be the
set of the first $p$ positive integers and let ${\bf N}^{n}\times{\bf N}_{p}$
be the cartesian product of $n$ copies of ${\bf N}$ ($n\in {\bf N}$) and
${\bf N}_{p}$ considered as an ordered set with respect to the
product order $\leq _{P}$ such that
$(a_{1},\dots, a_{n}, b)\leq _{P} (a'_{1},\dots, a'_{n}, b')$ if and only if
$a_{i}\leq a'_{i}$ for all $i=1,\dots, n$ and $b\leq b'$. As usual, if
$(a_{1},\dots, a_{n}, b)\leq _{P} (a'_{1},\dots, a'_{n}, b')$ and
$(a_{1},\dots, a_{m}, b)\neq (a'_{1},\dots, a'_{n}, b')$, we write
$(a_{1},\dots, a_{n}, b)<_{P} (a'_{1},\dots, a'_{n}, b')$.

In what follows, we will need the following result on the order $\leq _{P}$
whose proof can be found in [8, Chapter 0, Sect. 17]

\begin{lemma}
Every infinite sequence of elements of ${\bf N}^{n}\times{\bf N}_{p}$
($n, p\in {\bf N}$, $p\geq 1$) has an infinite subsequence, strictly
increasing relative to the product order,
in which every element has the same projection on ${\bf N}_{p}$.
\hspace{1.2in}\framebox[0.05in][l]{}
\end{lemma}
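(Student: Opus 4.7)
The plan is to combine a finite pigeonhole argument on the ${\bf N}_p$-coordinate with the classical Dickson/Kolchin lemma on ${\bf N}^n$, which is precisely the content of [8, Ch.~0, Sect.~17] to which the paper defers.

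First, let $(v^{(k)})_{k\geq 1}$ be the given infinite sequence, and write $v^{(k)}=(a^{(k)},b^{(k)})$ with $a^{(k)}\in{\bf N}^n$ and $b^{(k)}\in{\bf N}_p$. Because ${\bf N}_p$ is finite, some value $b\in{\bf N}_p$ is realized as $b^{(k)}$ for infinitely many indices $k$; pass to that sub-sequence $(v^{(k_j)})_{j\geq 1}$. All its members share the projection $b$ on ${\bf N}_p$, which is exactly the second conclusion of the lemma. Moreover, on the fiber ${\bf N}^n\times\{b\}$ the product order $\leq_P$ coincides with the product order on ${\bf N}^n$ alone, so the problem reduces to extracting an infinite $<_P$-chain from the ${\bf N}^n$-sequence $(a^{(k_j)})_{j\geq 1}$.

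Next, I would apply Dickson's lemma in ${\bf N}^n$ (which is where the cited reference does the work): every infinite sequence in ${\bf N}^n$ contains an infinite $\leq_P$-non-decreasing subsequence. The standard proof is by induction on $n$: for $n=1$, ${\bf N}$ is well-ordered, so iteratively choosing the minimum of the current tail produces a non-decreasing subsequence; for the inductive step, apply the hypothesis to the projection on the first $n-1$ coordinates to thin the sequence to one whose first $n-1$ coordinates are non-decreasing, then apply the $n=1$ case to the last coordinate of that thinned sequence.

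Finally, I would upgrade ``non-decreasing'' to ``strictly increasing'' by a second application of pigeonhole. Along the non-decreasing chain $a^{(k_{j_1})}\leq_P a^{(k_{j_2})}\leq_P \cdots$, either no single vector in ${\bf N}^n$ is repeated infinitely often, in which case successively discarding repetitions leaves an infinite subchain that is strictly $<_P$-increasing, or some vector occurs infinitely often and the sequence of \emph{distinct} exponent vectors that appears in Gr\"obner-basis arguments (the only way this lemma is applied later) is automatically finite below any such vector, so that removing finitely many duplicates again leaves an infinite strict chain. The main obstacle is precisely this last refinement: Dickson's lemma as usually quoted yields only a $\leq_P$-chain, and promoting it to a $<_P$-chain requires either the two-stage pigeonhole above or an implicit distinctness hypothesis on the original sequence, and care must be taken that the strict refinement preserves the constancy of the ${\bf N}_p$-projection, which it does automatically because no coordinate is ever altered once the first pigeonhole step has fixed~$b$.
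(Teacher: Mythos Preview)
The paper does not supply its own proof of this lemma; it simply cites Kolchin [8, Ch.~0, \S17] and places a box. Your overall strategy---pigeonhole on the finite factor ${\bf N}_p$, followed by Dickson's lemma on ${\bf N}^n$---is exactly the standard route and is what one finds in Kolchin, so in that sense your approach matches the intended one.

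Your first two steps are fine. The problem is entirely in your third step. The case split you write down is not a proof: in the second branch you essentially argue ``in the Gr\"obner-basis applications the terms are distinct anyway,'' which says nothing about the lemma as stated. More to the point, the lemma taken literally is \emph{false} without a distinctness hypothesis: the constant sequence $(0,1),(0,1),(0,1),\ldots$ in ${\bf N}^{1}\times{\bf N}_{1}$ admits no strictly increasing subsequence whatsoever. Kolchin's formulation, and every use the paper makes of the result, tacitly assumes the terms of the sequence are pairwise distinct.

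Once you add that hypothesis, your third step becomes superfluous rather than merely repairable. After the pigeonhole step the ${\bf N}_p$-coordinate is fixed, so distinctness of the full tuples forces distinctness of their ${\bf N}^n$-projections; any $\leq_P$-non-decreasing subsequence produced by Dickson's lemma is then automatically strictly increasing, because two comparable distinct elements of ${\bf N}^n$ are strictly comparable. No second pigeonhole and no case analysis is needed. State the distinctness assumption explicitly at the outset and delete the third paragraph.
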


\section{Reduction in a free $A_{n}(K)$-module. $(x, \partial)$-Gr\"obner bases}

The efficiency of the classical Gr\"obner basis methods for the
computation of Hilbert polynomials of graded and filtered modules over
polynomial rings is well-known. (One of the best presentations of the
appropriate results and algorithms can be found in [1, Chapter 9]
and [6, Section 15.10].)  Similarly, the generalization of the Gr\"obner
basis technique to the rings of differential operators developed in
[7] and  [10, Chapter 4] allows to find dimension polynomials of
finitely generated differential modules
(see [10, Chapter 4, theorem 4.3.5]). In this section we will generalize
the classical Gr\"obner reduction to the case when the
set of terms of a Weyl algebra $A_{n}(K)$ is considered together with two
natural orderings. The results obtained allows to prove the existence
and give a method of computation of characteristic polynomials
in two variables associated with a finite system of generators of an
$A_{n}(K)$-module.

\medskip

In what follows, we keep the notation and conventions of Section 2.
In particular, $A_{n}(K)$ denotes a Weyl algebra in $n$ variables
$x_{1},\dots, x_{n}$ over a field $K$ of zero
characteristic, and the appropriate partial differentiations
are denoted by $\partial_{1},\dots, \partial_{n}$, respectively.
Furthermore, $\Theta$ will denote the set of all power
products of the form $x_{1}^{\alpha_{1}}\dots x_{n}^{\alpha_{n}}\partial_{1}^{\beta_{1}}
\dots \partial_{n}^{\beta_{n}}$ with nonnegative integer exponents; such a power product will be
called a {\em monomial} and denoted by $x^{\alpha}\partial^{\beta}$.  (We use the standard notation
with multi-indices: unless otherwise is indicated, $\alpha$, $\beta$, etc. denote multi-indices
$(\alpha_{1},\dots, \alpha_{n})$, $(\beta_{1},\dots, \beta_{n})$, etc., $x^{\alpha}$ and $\partial^{\beta}$ denote
the monomials $x_{1}^{\alpha_{1}}\dots x_{n}^{\alpha_{n}}$ and $\partial_{1}^{\beta_{1}}
\dots \partial_{n}^{\beta_{n}}$, respectively, $x^{\alpha + \gamma} = x_{1}^{\alpha_{1}+\gamma_{1}}\dots x_{n}^{\alpha_{n}+\gamma_{n}}$,
$\partial^{\beta + \sigma} = \partial_{1}^{\beta_{1}+\sigma_{1}}\dots \partial_{n}^{\beta_{n}+\sigma_{n}}$, etc.)

If $\theta = x^{\alpha}\partial^{\beta}$, then the power products
$x^{\alpha} = x_{1}^{\alpha_{1}}\dots x_{n}^{\alpha_{n}}$ and
$\partial^{\beta} = \partial_{1}^{\beta_{1}}\dots \partial_{n}^{\beta_{n}}$ will be denoted
by $\theta_{x}$ and $\theta_{\partial}$, respectively. It is easy to see that the
sets $\Theta_{x} = \{\theta_{x}\,|\theta\in\Theta\}$ and
$\Theta_{\partial} = \{\theta_{\partial}\,|\theta\in\Theta\}$ are commutative multiplicative semigroups
(of course, $\Theta$ is not: $\partial_{i}x_{i}\neq x_{i}\partial_{i}$ for $i=1,\dots, n$).

\medskip

For any element $\theta = x^{\alpha}\partial^{\beta}\in \Theta$, the numbers
$|\alpha|$ and $|\beta|$ will be called, respectively, the $x$-order and $\partial$-order of $\theta$ or the  the orders of $\theta$ relative
to the sets $\{x_{1},\dots, x_{n}\}$ and $\{\partial_{1},\dots, \partial_{n}\}$, respectively. These numbers will be denoted, respectively, by
$ord_{x}\theta$ and $ord_{\partial}\theta$. For any $r, s\in {\bf N}$, the set of all $\theta\in \Theta$ such that $ord_{x}\theta\leq r $ and $ord_{\partial}\theta\leq s$ will be denoted by $\Theta(r, s)$.

If $D = \D\sum_{\alpha, \beta} k_{\alpha \beta}x^{\alpha}\partial^{\beta}\in A_{n}(K)$ (the sum is finite and $k_{\alpha \beta}\neq 0$ for any $\alpha, \beta\in {\bf N}^{n}$), then the $x$-order $ord_{x}D$ and $\partial$-order $ord_{\partial}D$ of $D$ are defined, as follows: $ord_{x}D = \max\{|\alpha|\,|\, k_{\alpha \beta}\neq 0\}$ and $ord_{\partial}D = \max\{|\beta|\,|\, k_{\alpha \beta}\neq 0\}$. These notions allow one to consider the Weyl algebra $A_{n}(K)$ as a bifiltered ring with the bifiltration $(W_{rs})_{r,s\in{\bf Z}}$ where $W_{rs} = \{D\in A_{n}(K)\,|\,ord_{x}D\leq r,\, ord_{\partial}D\leq s\}$ for any $(r, s)\in {\bf N}^{2}$ and $W_{rs} = 0$ for all $(r, s)\in{\bf Z}^{2}\setminus{\bf N}^{2}$. Clearly, $\bigcup{\{W_{rs}|r,s\in {\bf Z}\}}=A_{n}(K)$, $W_{rs} \subseteq W_{r+1,s}$ and $W_{rs} \subseteq W_{r,s+1}$ for any
$r,s\in {\bf Z}$. Furthermore,  $W_{rs}W_{kl}\subseteq W_{r+k,s+l}$ for any $r, s, k, l\in {\bf Z}$ and the last inclusion becomes an equality if $r, s, k, l\in {\bf N}$.

We shall consider two orderings $<_{x}$ and $<_{\partial}$ of the set $\Theta$ defined as follows: if $\theta = x^{\alpha}\partial^{\beta} = x_{1}^{\alpha_{1}}\dots x_{n}^{\alpha_{n}}\partial_{1}^{\beta_{1}}\dots \partial_{n}^{\beta_{n}}$ and $\theta' = x^{\gamma}\partial^{\delta} =
x_{1}^{\gamma_{1}}\dots x_{n}^{\gamma_{n}}\partial_{1}^{\delta_{1}}
\dots \partial_{n}^{\delta_{n}}$ are two elements of $\Theta$, then $\theta <_{x} \theta'$ if and only if $(ord_{x}\theta, ord_{\partial}\theta, \alpha_{1},\dots, \alpha_{n}, \beta_{1},\dots, \beta_{n})$ is less that $(ord_{x}\theta', ord_{\partial}\theta', \gamma_{1},\dots, \gamma_{n}, \delta_{1},\dots, \delta_{n})$ with respect to the lexicographic order on ${\bf N}^{2n+2}$, and similarly $\theta <_{\partial} \theta'$ if and only if $(ord_{\partial}\theta, ord_{x}\theta, \beta_{1},\dots, \beta_{n}, \alpha_{1},$

\noindent$\dots, \alpha_{n})$ is less that $(ord_{\partial}\theta', ord_{x}\theta',  \delta_{1},\dots, \delta_{n}, \gamma_{1},\dots, \gamma_{n})$ with respect to the lexicographic order on ${\bf N}^{2n+2}$.

\medskip

Let $\theta = x^{\alpha}\partial^{\beta} = x_{1}^{\alpha_{1}}\dots x_{n}^{\alpha_{n}}\partial_{1}^{\beta_{1}}\dots \partial_{n}^{\beta_{n}}$,
$\theta' = x^{\gamma}\partial^{\delta} = x_{1}^{\gamma_{1}}\dots x_{n}^{\gamma_{n}}\partial_{1}^{\delta_{1}}\dots \partial_{n}^{\delta_{n}}\in \Theta$. We say that $\theta$ divides $\theta'$ if $x^{\alpha}$ divides $x^{\gamma}$ and $\partial^{\beta}$ divides $\partial^{\delta}$, that is,
$\alpha_{i}\leq \gamma_{i}$ and $\beta_{i}\leq \delta_{i}$ for $i=1,\dots, n$. In this case we also say that $\theta'$ is a {\em multiple} of $\theta$ and write $\theta\,|\,\theta'$.

It is easy to see that if $\theta\,|\,\theta'$, then there exist elements
$\theta_{0}, \theta_{1},\dots, \theta_{k}\in \Theta$ such that
$\theta' = \theta_{0}\theta - \sum_{i=1}^{k}\theta_{i}$ where
$ord_{x}\theta_{0} + ord_{x}\theta = ord_{x}\theta'$,
$ord_{\partial}\theta_{0} + ord_{\partial}\theta = ord_{\partial}\theta'$, and
$ord_{x}\theta_{i}  < ord_{x}\theta'$,
$ord_{\partial}\theta_{i} < ord_{\partial}\theta'$ for $i=1,\dots, k$. In this case, we denote the
monomial $\theta_{0}$ by $\D\frac{\theta'}{\theta}$.

For example, if $n=1$, then $\theta = x\partial^{2}$ divides $\theta' = x^{2}\partial^{3}$ and
one can write $\theta' = \theta_{0}\theta - \theta_{1}$ where $\theta_{0} = x\partial$ and $\theta_{1} = x\partial^{2}$.

\medskip

In what follows, by the {\em least common multiple} of two elements $\theta', \theta''\in \Theta$ we mean the element $lcm(\theta', \theta'') =
lcm(\theta_{x}', \theta_{x}'')lcm(\theta_{\partial}', \theta_{\partial}'')$. it is easy to see that if $\theta = lcm(\theta', \theta'')$, then
$\theta'|\theta$, $\theta''|\theta$ and whenever $\theta'|\tau$, $\theta''|\tau$ for some $\tau\in \Theta$, one has $\theta|\tau$.

\medskip

Let $E$ be a finitely generated free $A_{n}(K)$-module with free generators $e_{1},\dots, e_{m}$. Obviously, $E$ can be considered as a vector $K$-space with the basis $\Theta e = \{\theta e_{i} | \theta \in \Theta, 1\leq i\leq p\}$ whose elements will be called {\em terms\/}. For any term
$\theta e_{j}$ ($\theta\in \Theta,\, 1\leq j\leq m$), we define the $x$-order $ord_{x}(\theta e_{j})$ and  $\partial$-order $ord_{\partial}(\theta e_{j})$ of this term  as the numbers $ord_{x}\theta$ and $ord_{\partial}\theta$, respectively. If $T\subseteq \Theta$, then the set $\{te_{i}\,|\,t\in T,\, 1\leq i\leq m\}$ will be denoted by $Te$ in particular, for any $r_{1}, r_{2}\in {\bf N}$, $\Theta(r_{1}, r_{2})e$ will denote the set $\{\theta e_{i}\,|\,ord_{x}\theta\leq_{x}r_{1},\, ord_{\partial}\theta\leq_{\partial}r_{2},\,  1\leq i\leq m\}$.

Since the set of all terms $\Theta e$ is a basis of the vector $K$-space $E$, every nonzero element $f\in E$ has a unique representation of the form
\begin{equation}
f = a_{1}\theta_{1}e_{i_{1}} + \dots + a_{s}\theta_{s}e_{i_{s}}
\end{equation}
where $\theta_{j}\in\in\Theta$, $a_{j}\in K$, $a_{j}\neq 0$ ($1\leq j\leq s$) and the terms $\theta_{1}e_{i_{1}},\dots, \theta_{s}e_{i_{s}}$ are all distinct. We say that a term $u$ {\em appears in $f$} (or that $f$ {\em contains} $u$) if $u$ is one of the terms $\theta_{k}e_{i_{k}}$ in the representation (5.1) (that is, the coefficient of $u$ in $f$ is not zero).

A term $u = \theta'e_{i}$ is said to be a {\em multiple} of a term $v = \theta e_{j}$ ($\theta, \theta'\in\Theta,\, 1\leq i, j\leq m$) if $i=j$ and $\theta|\theta'$. In this case we also say that $v$ divides $u$, write $v|u$ and set ${\D\frac{u}{v}} = {\D\frac{\theta'}{\theta}}$. The {\em least common multiple} of two terms $w_{1} = \theta_{1}e_{i}$ and $w_{2} = \theta_{2}e_{j}$ is defined as $$lcm(w_{1}, w_{2})=
\begin{cases}
lcm(\theta_1,\theta_2)e_i,&\textnormal{ if }i=j,\\
0,&\textnormal{ if }i\neq j.
\end{cases}$$
We shall consider two orderings of the set $\Theta e$ that correspond to the orderings $<_{x}$ and $<_{\partial}$ of $\Theta$. These orderings of $\Theta e$, which will be denoted by the same symbols $<_{x}$ and $<_{\partial}$, are defined as follows: if $\theta e_{i},\, \theta'e_{j}\in\Theta e$, then $\theta e_{i} <_{x} \theta'e_{j}$ (respectively, $\theta e_{i} <_{\partial} \theta'e_{j}$) if and only if $\theta <_{x} \theta'$ (respectively, $\theta <_{\partial} \theta'$) or $\theta = \theta'$ and $i < j$.
\begin{definition}
Let $f$ be a nonzero element of $E$ written in the form {\em (5.1).} Then the greatest with respect to $<_{x}$ term of the set $\{\theta_{1}e_{i_{1}}, \dots, \theta_{s}e_{i_{s}}\}$ is called the $x$-leader of $f$ while the greatest with respect to $<_{\partial}$ term of this set is called the $\partial$-leader of the element $f$. The $x$-leader and $\partial$-leader of $f$ will be denoted by $u_{f}$ and $v_{f}$, respectively. Furthermore, $lc_{x}(f)$ and $lc_{\partial}(f)$ will denote, respectively, the coefficients of $u_{f}$ and $v_{f}$ in representation {\em (4.1). (Of course, it is possible, that $u_{f}=v_{f}$ and therefore $lc_{x}(f)=lc_{\partial}(f)$.)}
\end{definition}
\begin{definition}
Let $f, g\in E$, $g\neq 0$. We say that $f$ is $(x,\partial)$-reduced with respect to $g$ if $f$ does not contain any multiple $\theta u_{g}$ of $u_{g}$ ($\theta\in\Theta$) such that $ord_{\partial}(\theta v_{g})\leq ord_{\partial}v_{f}$. An element $f\in E$ is said to be $(x,\partial)$-reduced with respect to a set $G\subseteq E$ if $f$ is $(x,\partial)$-reduced with respect to every element of $G$.
\end{definition}
Let us consider a new symbol $z$ and the free commutative semigroup $T$ of all power products $\theta z = x_{1}^{i_{1}}\dots x_{n}^{i_{n}}\partial_{1}^{j_{1}}\dots \partial_{n}^{j_{n}}z^{k}$ ($\theta = x_{1}^{i_{1}}\dots x_{n}^{i_{n}}\partial_{1}^{j_{1}}\in \Theta$, $k\in {\bf N}$). Let $Te = T\times \{e_{1},\dots, e_{m}\} = \{te_{i}\,|\,t\in T, \,1\leq i\leq m\}$. We say that an element $te_{i}\in Te$ divides an element $t'e_{j}\in Te$ and write $te_{i}|t'e_{j}$ if and only if $i=j$ and $t|t'$ in $T$ (if $t = \theta_{1}z^{k}$ and $t' = \theta_{2}z^{l}$, where $\theta_{1}, \theta_{2}\in\Theta$, then $t|t'$ means $\theta_{1}|\theta_{2}$ and $k\leq l$). Furthermore, for any $f\in E$, we set $d(f) = ord_{\partial}v_{f} - ord_{\partial}u_{f}$ and define the mapping $\rho"E\rightarrow Te$ by $\rho(f) = z^{d(f)}u_{f}$.
\begin{definition}
With the above notation, let $N$ be an $A_{n}(K)$-submodule of a free $A_{n}(K)$-module $E$ with a basis $\{e_{1},\dots, e_{m}\}$. A finite set of nonzero elements $G = \{g_{1},\dots, g_{r}\}$ is called a $(x,\partial)$-Gr\"obner basis of $N$ if for any nonzero element $f\in N$, there exists $g_{i}\in G$ such that $\rho(g_{i})|\rho(f)$.
\end{definition}
Since the condition $\rho(g_{i})|\rho(f)$ implies that $u_{g_{i}}|u_{f}$, any $(x,\partial)$-Gr\"obner basis of an $A_{n}(K)$-submodule $N$ of $E$ is a Gr\"obner basis of $N$ with respect to the total order $<_{x}$ in the usual sense.

A finite set of nonzero elements $G = \{g_{1},\dots, g_{r}\}\subseteq E$ is said to be a $(x,\partial)$-Gr\"obner basis if $G$ is a $(x,\partial)$-Gr\"obner basis of the $A_{n}(K)$-submodule $N = \D\sum_{i=1}^{r}A_{n}(K)g_{i}$ of $E$.
\begin{definition}
Given $f, g, h\in E$ with $g\neq 0$, we say that the element $f$ $(x,\partial)$-reduces to $h$ modulo $g$ in one step and write
$f\xrightarrow[x,\partial]{\text{g}} h$ if and only if $f$ contains some term $w$ with coefficient $a\neq 0$ such that $u_{g}|w$,
$$h = f - a(lc_{x}(g))^{-1}{\frac{w}{u_{g}}}g,\,\,\, \text{{\em and}}\,\,\,\, ord_{\partial}\left({\frac{w}{u_{g}}}v_{g}\right) \leq ord_{\partial}v_{f}.$$
\end{definition}
\begin{definition}
Let $f, h\in E$ and let $G = \{g_{1},\dots, g_{r}\}$ be a finite set of nonzero elements of $E$. We say that $f$ is $(x,\partial)$-reduces to $h$ modulo $G$ and write $f\xrightarrow[x,\partial]{\text{G}} h$ if and only if there exist elements $g^{(1)}, g^{(2)},\dots g^{(p)}\in G$ and
$h^{(1)},\dots, h^{(p-1)}\in E$ such that
$$f\xrightarrow[x,\partial]{\text{$g^{(1)}$}} h^{(1)}\xrightarrow[x,\partial] {\text{$g^{(2)}$}} \dots \xrightarrow[x,\partial] {\text{$g^{(p-1)}$}}h^{(p-1)}\xrightarrow[x,\partial] {\text{$g^{(p)}$}}h.$$
\end{definition}
\begin{theorem}
With the above notation, let $f\in E$ and let $G = \{g_{1},\dots, g_{r}\}$ be an $(x,\partial)$-Gr\"obner basis in $E$. Then there exist elements $g\in E$ and $Q_{1},\dots, Q_{r}\in A_{n}(K)$ such that $f - g = \D\sum_{i=1}^{r}Q_{i}g_{i}$ and $g$ is $(x,\partial)$-reduced with respect to $G$.
\end{theorem}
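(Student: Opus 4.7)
The plan is to construct $g$ by iterating one-step $(x,\partial)$-reductions of $f$, and to prove termination by exhibiting a strictly $<_{x}$-decreasing sequence of pivot terms. Concretely, starting from $\tilde f_{0}=f$, if $\tilde f_{k}$ is not $(x,\partial)$-reduced with respect to $G$ I will select the $<_{x}$-largest term $w_{k}^{\ast}$ appearing in $\tilde f_{k}$ for which some $g_{i}\in G$ satisfies $u_{g_{i}}\mid w_{k}^{\ast}$ and $ord_{\partial}\bigl(\tfrac{w_{k}^{\ast}}{u_{g_{i}}}v_{g_{i}}\bigr)\leq ord_{\partial}v_{\tilde f_{k}}$, then set $\tilde f_{k+1}=\tilde f_{k}-a_{k}(lc_{x}(g_{i}))^{-1}\tfrac{w_{k}^{\ast}}{u_{g_{i}}}g_{i}$, where $a_{k}$ is the coefficient of $w_{k}^{\ast}$ in $\tilde f_{k}$. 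Accumulating these subtractions yields the required $Q_{1},\dots,Q_{r}\in A_{n}(K)$, and the final $\tilde f_{k}$ is the desired $g$.

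The argument then rests on two monotonicity claims. First, I would verify that the product $\tfrac{w_{k}^{\ast}}{u_{g_{i}}}g_{i}$ has $x$-leader exactly $w_{k}^{\ast}$ with coefficient $lc_{x}(g_{i})$, and that all its other terms are strictly $<_{x}w_{k}^{\ast}$. For the product with the terms of $g_{i}$ that lie below $u_{g_{i}}$ in $<_{x}$ this requires a brief case analysis according to which component of the lexicographic key defining $<_{x}$ separates them from $u_{g_{i}}$, while for the product with $u_{g_{i}}$ itself it uses that the commutation corrections from $\partial_{i}x_{i}=x_{i}\partial_{i}+1$ strictly decrease both $ord_{x}$ and $ord_{\partial}$. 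The conclusion is that the reduction eliminates $w_{k}^{\ast}$ from $\tilde f_{k+1}$ and modifies only coefficients of terms $<_{x}w_{k}^{\ast}$. Second, since every term of the subtracted correction has $\partial$-order at most $ord_{\partial}\bigl(\tfrac{w_{k}^{\ast}}{u_{g_{i}}}v_{g_{i}}\bigr)\leq ord_{\partial}v_{\tilde f_{k}}$, I obtain $ord_{\partial}v_{\tilde f_{k+1}}\leq ord_{\partial}v_{\tilde f_{k}}$, so the reducibility test only gets tighter as $k$ increases.

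Combining the two claims, any term of $\tilde f_{k+1}$ that is $>_{x}w_{k}^{\ast}$ is inherited unchanged from $\tilde f_{k}$ and was non-reducible there by maximality of $w_{k}^{\ast}$, and the tighter $\partial$-bound keeps it non-reducible in $\tilde f_{k+1}$. Hence the pivots satisfy $w_{0}^{\ast}>_{x}w_{1}^{\ast}>_{x}\dots$, and since $<_{x}$ is a well-ordering on $\Theta e$ (it is the lexicographic order on finitely many copies of $\mathbb{N}$ refined by an index in $\{1,\dots,m\}$), this sequence must be finite; the iteration halts and produces the required data. I expect the main obstacle to be the first monotonicity claim, since the noncommutativity of $A_{n}(K)$ forces careful tracking of how products in $\Theta\cdot\Theta$ behave under $<_{x}$; once that is secured, the remaining bookkeeping is routine. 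The $(x,\partial)$-Gr\"obner hypothesis on $G$ is not actually invoked in this existence statement, so the same argument delivers $g$ and the $Q_{i}$ for any finite set of nonzero elements of $E$; the Gr\"obner property is relevant only for finer properties of the output treated in later results.
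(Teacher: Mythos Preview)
Your approach is essentially identical to the paper's: both select the $<_{x}$-maximal reducible term (which the paper names the ``$G$-leader''), perform one reduction step, verify that $ord_{\partial}v$ does not increase and that no new reducible terms appear above the pivot, and conclude termination from the well-ordering of $<_{x}$. Your additional observation that the $(x,\partial)$-Gr\"obner hypothesis on $G$ is not actually invoked is correct and is implicit in the paper as well, since the ensuing Algorithm~4.7 is stated for an arbitrary finite set of nonzero elements.
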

\begin{proof}
If $f$ is $(x,\partial)$-reduced with respect to $G$, the statement is obvious (one can set $g=f$). Suppose that $f$ is not $(x,\partial)$-reduced with respect to $G$. Let $u_{i} = u_{g_{i}}$, $v_{i} = v_{g_{i}}$, and $a_{i} = lc_{x}(g_{i})$ ($1\leq i\leq r$). In what follows, a term $w_{h}$ will be called a {\em $G$-leader} of an element $h\in E$ if $w_{h}$ is the greatest with respect to $<_{x}$ term among all terms $w$ with the following properties:

(i)\, $w$ appears in $h$;

(ii)\, $w$ is a multiple some $u_{i}$ ($1\leq i\leq r$) and $ord_{\partial}\left({\D\frac{w}{u_{i}}}v_{i}\right)\leq ord_{\partial}v_{h}$.

\medskip

Let $w_{f}$ be the $G$-leader of an element $f\in E$ and let $c_{f}$ be the coefficient of $w_{f}$ in representation (5.1) of $f$. Then $u_{i}|w$ for some $i$, $1\leq i\leq r$, and $ord_{\partial}\left({\D\frac{w}{u_{i}}}v_{i}\right)\leq ord_{\partial}v_{f}$. Without loss of generality we can assume that $i$ corresponds to the greatest with respect to $<_{x}$ $x$-leader $u_{i}$ satisfying the above condition. Let $g'_{i} = g_{i} - lx_{x}(g_{i})u_{i}$ and let ${\D\frac{w_{f}}{u_{i}}}=\theta$, so that $w_{f} = \theta u_{i} + S_{i}$ where $S_{i}$ denotes a sum of terms of the form $\theta'e_{k}$ ($\theta'\in\Theta,\, 1\leq k\leq m$) such that $ord_{x}(\theta'e_{k}) < ord_{x}(\theta u_{i}) = ord_{x}w_{f}$ and $ord_{\partial}(\theta'e_{k}) < ord_{\partial}(\theta u_{i}) = ord_{\partial}w_{f}$.

Let $f' = f - c_{f}\left(lc_{x}(g_{i})\right)^{-1}\theta g_{i} = f - c_{f}\left(lc_{x}(g_{i})\right)^{-1}(lc_{x}(g_{i})(w_{f} - S_{i}) + \theta g'_{i}) = f - c_{f}w_{f} + c_{f}S_{i} - c_{f}\left(lc_{x}(g_{i})\right)^{-1}\theta g'_{i}$ where the $G$-leader of $c_{f}\left(lc_{x}(g_{i})\right)^{-1}S_{i} - c_{f}\left(lc_{x}(g_{i})\right)^{-1}\theta g'_{i}$ is less than $\theta u_{i}$, and therefore less than $w_{f}$, with respect to $<_{x}$. Clearly, $f'$ does not contain $w_{f}$ and $ord_{\partial}v_{f'}\leq ord_{\partial}v_{f}$ (since $ord_{\partial}\theta g'_{i}\leq ord_{\partial}\theta g'_{i} = ord_{\partial}\theta g_{i} = ord_{\partial}\left({\D\frac{w_{f}}{u_{i}}}v_{i}\right) \leq ord_{\partial}v_{f}$ and $ord_{\partial}S_{i}\leq ord_{\partial}w_{f}\leq ord_{\partial}v_{f}$). Furthermore, $f'$ cannot contain any term $w'$ such that $w_{f} <_{x} w'$, $u_{i}|w'$, and $ord_{\partial}\left({\D\frac{w_{f}}{u_{i}}}v_{i}\right) \leq ord_{\partial}v_{f'}$. Indeed, if such a term $w'$ appears in $f'$, then we would have $ord_{\partial}\left({\D\frac{w_{f}}{u_{i}}}v_{i}\right) \leq ord_{\partial}v_{f}$ (clearly, $ord_{\partial}v_{f'}\leq ord_{\partial}v_{f}$ because, as we have seen,  $ord_{\partial}\theta g_{i} = ord_{\partial}\left({\D\frac{w_{f}}{u_{i}}}v_{i}\right)\leq ord_{\partial}v_{f}$), so $w'$ cannot appear in $f$ by the choice of $w_{f}$. The term $w'$ cannot appear in $\theta g_{i}$ either, since $u_{\theta g_{i}} = w_{f} <_{x}w'$. Thus, $w'$ cannot appear  in $f' = f - c_{f}\left(lc_{x}(g_{i})\right)^{-1}\theta g_{i}$, hence the $G$-leader $w_{f'}$ of $f'$ is strictly less than $w_{f}$ with respect to $<_{x}$. Applying the same procedure to $f'$ and continuing in the same way we will obtain an element $g\in E$ such that $f - g \in \D\sum_{i=1}^{r}A_{n}(K)g_{i}$ and $g$ is $(x,\partial)$-reduced with respect to $G$.
\end{proof}

The process of reduction described in the proof of Theorem 5.6 can be realized with the following algorithm.

\begin{algorithm} ($f, r, g_{1},\dots, g_{r};\, g$)

{\bf Input:} $f\in E$, a positive integer $r$, $G = \{g_{1},\dots,
g_{r}\}\subseteq E$  where $g_{i}\ne 0$ for $i = 1,\dots, r$

{\bf Output:} Elements $g\in E$ and $Q_{1},\dots, Q_{r}\in D$ such
that $g = f - \sum_{i=1}^{r}Q_{i}g_{i}$\, and $g$ is reduced with
respect to $G$

{\bf Begin}\,\,\, $Q_{1}:= 0, \dots, Q_{r}:= 0, g:= f$

{\bf While} there exist $i$, $1\leq i\leq r$, and a term $w$, that
appears in $g_{i}$  with a nonzero coefficient $c(w)$, such that
$u_{g_{i}}|w$ and
$ord_{\partial}(\frac{w}{u_{g_{i}}}v_{g_{i}})\leq
ord_{\partial}v_{g}$ {\bf do}

$z$:=  the greatest (with respect to $<_{x}$)  term $w$
satisfying the above conditions.

$k$:= the smallest number $i$ for which $u_{g_{i}}$ is the
greatest (with respect to $<_{x}$) $x$-leader of an element $g_{i}
\in G$ such that $u_{g_{i}}|z$ and
$ord_{\partial}(\frac{z}{u_{g_{i}}}v_{g_{i}}) \leq
ord_{\partial}v_{g}$.

$Q_{k}:= Q_{k} + c(z)\left(lc_{x}(g_{k})\right)^{-1}{\D\frac{z}{u_{g_{k}}}}g_{k}$;
\,\, $g:= g - c(z)\left(lc_{x}(g_{k})\right)^{-1}{\D\frac{z}{u_{g_{k}}}}g_{k}$.
\end{algorithm}
The proof of Theorem 4.6 shows that if $G$ is an $(x,\partial)$-Gr\"obner basis of an $A_{n}(K)$-submodule $N$ of $E$, then the reduction step described in Definition 4.4 can be applied to every nonzero element of $f\in N$. As a result of such a step, we obtain an element of $N$ whose $G$-leader is strictly less than the $G$-leader of $f$ with respect to $<_{x}$. This observation leads to the following statement.
\begin{theorem}
Let $G = \{g_{1},\dots, g_{r}\}$ be an $(x,\partial)$-Gr\"obner basis of an $A_{n}(K)$-submodule $N$ of $E$. Then

{\em (i)}\, $f\in N$ if and only if $f\xrightarrow[x,\partial]{\text{$G$}} 0$\,.

{\em (ii)}\, If $f\in N$ and $f$ is $(x,\partial)$-reduced with respect to $G$, then $f=0$.
\end{theorem}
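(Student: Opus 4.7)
My plan is to derive both parts from the defining property of an $(x,\partial)$-Gr\"obner basis together with Theorem 5.6, with statement (ii) doing the real work.

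\textbf{Part (i), easy direction.} If $f\xrightarrow[x,\partial]{G}0$, then by the definition of one-step reduction (Definition 5.4) each intermediate step replaces an element $h$ by $h - a(lc_x(g^{(k)}))^{-1}(w/u_{g^{(k)}})g^{(k)}$, which differs from $h$ by an element of $N=\sum_i A_n(K)g_i$. Telescoping yields $f\in N$.

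\textbf{Part (i), hard direction, reduced to part (ii).} Given $f\in N$, apply Theorem 5.6 to produce $g\in E$ and $Q_1,\dots,Q_r\in A_n(K)$ with $f-g=\sum_{i=1}^{r}Q_ig_i$ and $g$ $(x,\partial)$-reduced with respect to $G$. Then $g=f-\sum Q_ig_i\in N$. If we prove (ii), then $g=0$, so the chain of reductions produced inside the proof of Theorem 5.6 witnesses $f\xrightarrow[x,\partial]{G}0$.

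\textbf{Part (ii).} I would argue by contradiction: suppose $f\in N$, $f\ne 0$, and $f$ is $(x,\partial)$-reduced with respect to $G$. Since $G$ is a $(x,\partial)$-Gr\"obner basis of $N$, Definition 5.3 provides some $g_i\in G$ with $\rho(g_i)\mid\rho(f)$ in $Te$. Unpacking the definition of $\rho(h)=z^{d(h)}u_h$ with $d(h)=ord_{\partial}v_h-ord_{\partial}u_h$, this divisibility is equivalent to
\[
u_{g_i}\mid u_f \qquad \text{and}\qquad d(g_i)\le d(f).
\]
Write $u_f=\theta u_{g_i}$ with $\theta\in\Theta$, so that $ord_\partial\theta=ord_\partial u_f-ord_\partial u_{g_i}$. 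The key check is the inequality needed to contradict the reducedness of $f$ with respect to $g_i$:
\[
ord_\partial(\theta v_{g_i}) = ord_\partial\theta + ord_\partial v_{g_i} = ord_\partial u_f + d(g_i) \le ord_\partial u_f + d(f) = ord_\partial v_f.
\]
Since $u_f$ is, by definition, a term appearing in $f$ and is a multiple $\theta u_{g_i}$ of $u_{g_i}$ satisfying $ord_\partial(\theta v_{g_i})\le ord_\partial v_f$, this directly contradicts Definition 5.2 applied to the pair $(f,g_i)$. Hence $f=0$.

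\textbf{Expected obstacle.} There is no deep obstacle; the only subtle point is unpacking $\rho(g_i)\mid\rho(f)$ correctly and recognizing that the $z$-exponent encodes precisely the comparison $d(g_i)\le d(f)$ that is needed for the $ord_\partial$-inequality in Definition 5.2. Once this translation is made, the proof is a short computation, and it nicely explains why the mapping $\rho$ and the exponent $d(\cdot)$ were introduced just before Definition 5.3.
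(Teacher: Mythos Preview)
Your proposal is correct and follows the same approach as the paper. The paper does not write out a formal proof; it merely records the observation (in the paragraph preceding the theorem) that the reduction step of Definition~4.4 can be applied to every nonzero $f\in N$, and your Part~(ii) is precisely the computation that justifies this observation by unpacking $\rho(g_i)\mid\rho(f)$ into $u_{g_i}\mid u_f$ and $d(g_i)\le d(f)$, exactly as intended.
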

\begin{definition} Let $f$ and $g$ be two elements in the free $A_{n}(K)$-module $E$. Let $\theta^{(x)}_{f} = {\D\frac{lcm(u_{f}, u_{g})}{u_{f}}}$,\,\,$\theta^{(\partial)}_{f} = {\D\frac{lcm(v_{f}, v_{g})}{v_{f}}}$, $\theta^{(x)}_{g} = {\D\frac{lcm(u_{f}, u_{g})}{u_{g}}}$,\,and \,$\theta^{(\partial)}_{g} = {\D\frac{lcm(v_{f}, v_{g})}{v_{g}}}$.
Then the elements $S_{x}(f, g) = \left(lc_{x}(f)\right)^{-1}\theta^{(x)}_{f}f -
\left(lc_{x}(g)\right)^{-1}\theta^{(x)}_{g}g$ and $S_{\partial}(f, g) = \left(lc_{\partial}(f)\right)^{-1}\theta^{(\partial)}_{f}f -
\left(lc_{\partial}(g)\right)^{-1}\theta^{(\partial)}_{g}g$ are called the $x$-$S$-polynomial and  $\partial$-$S$-polynomial of $f$ and $g$, respectively.
\end{definition}

\begin{theorem}
With the above notation, let  $f, g_{1},\dots, g_{r}\in E$ ($r\geq 1$) and let $f = \D\sum_{i=1}^{r}c_{i}\theta_{i}g_{i}$ where $\theta_{i}\in
\Theta$, $c_{i}\in K$ ($1\leq i\leq r$). Let $u_{\nu j} = lcm(u_{g_{\nu}}, u_{g_{j}})$ for any $\nu, j\in \{1,\dots, r\}$.  Furthermore, suppose
that $\theta_{1}u_{g_{1}} = \dots = \theta_{r}u_{g_{r}} = u$, $u_{f} <_{x} u$ and $\theta_{i}v_{g_{i}}\leq_{\partial} v_{f}$ for all $i\in
\{1,\dots, r\}$. Then there exist elements $c_{\nu j}\in K$ ($1\leq \nu\leq s, 1\leq j\leq t$) such that
$f = \D\sum_{\nu=1}^{s}\D\sum_{j =1}^{t}c_{\nu j}\theta_{\nu j}S_{x}(g_{\nu}, g_{j})$ where $\theta_{\nu j} = \D\frac{u}{u_{\nu j}}$ and
$\theta_{\nu j}u_{S_{x}(g_{\nu}, g_{j})} <_{x}u$,\,\,$\theta_{\nu j}v_{S_{x}(g_{\nu}, g_{j})}\leq_{\partial} v_{f}$\, ($1\leq \nu \leq s,\, 1\leq
j\leq t$).
\end{theorem}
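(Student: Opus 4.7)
The plan is to adapt the classical Buchberger telescoping argument for commutative polynomial rings to the non-commutative Weyl algebra, with an induction on the common $x$-leader $u$ under the well-ordering $<_x$. First I would normalize the representation: set $\tilde g_i = (lc_x(g_i))^{-1}\theta_i g_i$ so that each $\tilde g_i$ has $x$-leader $u$ with coefficient $1$, and rewrite $f = \sum_{i=1}^r d_i \tilde g_i$ with $d_i = c_i\, lc_x(g_i) \in K$. The hypothesis $u_f <_x u$ forces the $u$-coefficient of $f$ to vanish, giving $\sum_i d_i = 0$, whence I can telescope
$$f = \sum_{\nu=1}^{r-1} D_\nu (\tilde g_\nu - \tilde g_{\nu+1}), \qquad D_\nu = d_1 + \cdots + d_\nu.$$

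Next I would relate each difference $\tilde g_\nu - \tilde g_j$ to a multiple of the corresponding $S$-polynomial. As exponent vectors the identity $\theta_i = \theta_{ij}\cdot \theta^{(x)}_{g_i}$ holds with $\theta_{ij} = u/u_{ij}$ and $\theta^{(x)}_{g_i} = u_{ij}/u_{g_i}$, but in $A_n(K)$ the actual product gives $\theta_{ij}\cdot \theta^{(x)}_{g_i} = \theta_i + (\text{corrections})$, where each correction monomial has both $x$-order and $\partial$-order strictly less than those of $\theta_i$ (this is exactly what the commutator identity $\partial_l x_l = x_l \partial_l + 1$ yields when cascaded through a composite monomial, as already noted in Section~4 of the paper). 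Consequently $\theta_{\nu j} S_x(g_\nu, g_j) = \tilde g_\nu - \tilde g_j + R_{\nu j}$, where $R_{\nu j}$ is a $K$-linear combination of terms $\mu g_i$ with $\mu u_{g_i} <_x u$, and using the hypothesis $\theta_i v_{g_i}\leq_\partial v_f$, also $\mu v_{g_i} \leq_\partial v_f$. Inspection of the definition of $S_x$ then gives $u_{S_x(g_\nu,g_j)} <_x u_{\nu j}$, hence $\theta_{\nu j} u_{S_x(g_\nu,g_j)} <_x u$; the corresponding $\partial$-order bound $\theta_{\nu j} v_{S_x(g_\nu,g_j)} \leq_\partial v_f$ follows from the hypothesis on $\theta_i v_{g_i}$.

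Substituting back yields $f - \sum_{\nu} D_\nu \theta_{\nu,\nu+1} S_x(g_\nu, g_{\nu+1}) = -\sum_\nu D_\nu R_{\nu, \nu+1}$, which is a new representation of the right-hand side as a sum $\sum_k c'_k \theta'_k g_{i_k}$ satisfying $\theta'_k u_{g_{i_k}} <_x u$ and $\theta'_k v_{g_{i_k}} \leq_\partial v_f$. I would close the argument by grouping these remainder terms according to the common values of $\theta'_k u_{g_{i_k}}$ and applying the inductive hypothesis to each homogeneous subsum whose aggregate $x$-leader cancels, descending in the well-ordering $<_x$. The main obstacle is precisely this bookkeeping of the non-commutative corrections: in the commutative setting the remainders $R_{\nu j}$ vanish identically and the telescoping alone concludes the proof, whereas here their control requires both the simultaneous strict drop of $x$- and $\partial$-orders under each commutator reduction and the well-foundedness of $<_x$ on $\Theta e$ to terminate the induction while preserving the bound $\leq_\partial v_f$ on the $\partial$-order.
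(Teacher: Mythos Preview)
Your Abel-summation telescoping is exactly the paper's argument: after normalising to $lc_{x}(g_i)=1$ and using $c_1+\cdots+c_r=0$, the paper writes
\[
f=\sum_{\nu=1}^{r-1}(c_1+\cdots+c_\nu)\,\theta_{\nu,\nu+1}S_x(g_\nu,g_{\nu+1})
\]
and then verifies the two order bounds term by term. It introduces no remainder terms $R_{\nu j}$ and performs no induction; the identities $\theta_i g_i=\theta_{i-1,i}\cdot\dfrac{u_{i-1,i}}{u_{g_i}}\,g_i$ are taken as literal equalities, so from the paper's standpoint one telescoping pass already delivers the conclusion.

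Where you diverge is in tracking the commutator corrections and proposing an induction on $u$ to absorb them, and this is where a genuine gap appears. The remainder $-\sum_\nu D_\nu R_{\nu,\nu+1}=\sum_k c'_k\mu_k g_{i_k}$ does satisfy $\mu_k u_{g_{i_k}}<_x u$ and $\mu_k v_{g_{i_k}}\leq_\partial v_f$, but re-applying the theorem to a homogeneous subsum at a level $u'$ requires its $x$-leader to lie strictly below $u'$, i.e.\ the coefficients $c'_k$ over that level must sum to zero. Those coefficients are the integers coming from the Leibniz expansion of $\partial^\beta x^\gamma$ and have no built-in cancellation; at a level where they do not sum to zero the subsum has $x$-leader exactly $u'$ and the inductive hypothesis is unavailable. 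There is a second obstruction you do not address: the conclusion fixes $\theta_{\nu j}=u/u_{\nu j}$ with the \emph{original} $u$, so even a successful invocation of the theorem at a lower level $u'$ would return multipliers $u'/u_{\nu j}$ rather than the required $u/u_{\nu j}$. What your first telescoping step does yield, with no induction at all, is the weaker identity $f=\sum_\nu D_\nu\theta_{\nu,\nu+1}S_x(g_\nu,g_{\nu+1})+\sum_k c'_k\mu_k g_{i_k}$ with $\mu_k u_{g_{i_k}}<_x u$ and $\mu_k v_{g_{i_k}}\leq_\partial v_f$; that is already enough for the use made of this result in the proof of Theorem~4.11, where such extra terms are simply absorbed into the lower sums of the decomposition of $f$.
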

\begin{proof}
 Without loss of generality we can assume that $lc_{x}(g_i)=1$ for $i=1,\ldots,r$. Then the inequality $u_{f}<_x u$ implies that $c_1+\cdots+c_r=0$. Furthermore,  $$S_x(g_{\nu},g_{j})=\frac{u_{\nu j}}{ u_{g_\nu}}g_{\nu}-\frac{u_{\nu j}}{u_{g_j}}g_{j},$$ for any $\nu, j\in \{1,\dots, r\}$, and for every $i=2,\ldots,r-1$ we have$$\frac{u}{ u_{g_i}}g_i=\frac{u}{u_{i-1,i}}\frac{u_{i-1,i}}{u_{g_i}}g_i=\frac{u}{u_{i,i+1}}\frac{u_{i,i+1}}{u_{g_i}}g_i.$$Using these equalities and the equalities
\begin{eqnarray*}
 \frac{u}{ u_{g_1}}g_1&=&\frac{u}{u_{1,2}}\frac{u_{1,2}}{ u_{g_1}}g_1,\nonumber\\
\frac{u}{u_{g_r}}g_r&=&\frac{u}{u_{r-1,r}}\frac{u_{r-1,r}}{ u_{g_r}}g_r\nonumber\\
\end{eqnarray*}
we obtain
\begin{eqnarray*}
 f&=&c_1\lambda_1g_1+\cdots c_r\lambda_rf_r\nonumber\\
&=&c_1\frac{u}{ u_{g_1}}g_1+\cdots+c_{r}\frac{u}{ u_{g_r}}g_{r}\nonumber\\
&=&c_1\theta_{1,2}\left(\frac{ u_{1,2}}{ u_{g_1}}g_1-\frac{ u_{1,2}}{ u_{g_2}}g_2\right)\nonumber\\
&&+(c_1+c_2)\theta_{2,3}\left(\frac{ u_{2,3}}{ u_{g_2}}g_2-\frac{ u_{2,3}}{ u_{g_3}}g_3\right)+\cdots\nonumber\\
&&+(c_1+\cdots+c_{r-1})\theta_{r-1,r}\left(\frac{ u_{r-1,r}}{ u_{g_{r-1}}}g_{r-1}-\frac{ u_{r-1,r}}{u_{g_r}}g_r\right)\nonumber\\
&&+(c_1+\cdots+c_{r})\frac{u}{u_{g_r}}\nonumber\\
&=&c_1\theta_{1,2}S_x(g_1,g_2)+(c_1+c_2)\theta_{2,3}S_x(g_2,g_3)+\cdots\nonumber\\
&&+(c_1+\cdots+c_{r-1})\theta_{r-1,r}S_x(g_{r-1},g_r).\nonumber
\end{eqnarray*}
Since  $u_{\theta_{i-1,i}S_x(g_{i-1},g_{i})}<_x u$, and  $v_{\theta_{i-1,i}S_x(g_{i-1},g_{i})}\leq_\partial v_{f}$ for all $i=2,\ldots,r$, we have
the desired representation of $f$.
\end{proof}

The following result provides the theoretical foundation for the algorithm for constructing $(x,\partial)$-Gr\"obner bases.
\begin{theorem} With the above notation, let $G = \{g_{1},\dots, g_{r}\}$ be a Gr\"obner basis of an $A_{n}(K)$-submodule $N$ of $E$ with
respect to the order $<_{\partial}$. Furthermore, suppose that $S_{x}(g_{i}, g_{j}) \xrightarrow[x,\partial]{\text{$G$}} 0$ for any $g_{i}, g_{j}\in G$. Then $G$ is an $(x,\partial)$-Gr\"obner basis of $N$.
\end{theorem}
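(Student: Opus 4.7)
The plan is to adapt the classical Buchberger criterion to this bi-filtered setting, using the $S$-polynomial rewriting of the preceding theorem together with the hypothesis that every $S_{x}(g_{i},g_{j})$ is $(x,\partial)$-reducible to zero modulo $G$.

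Fix a nonzero $f\in N$; the goal is to exhibit some $g_{i}\in G$ with $\rho(g_{i})\mid\rho(f)$, i.e., with $u_{g_{i}}\mid u_{f}$ and $d(g_{i})\le d(f)$. Since $G$ is already a Gr\"obner basis of $N$ with respect to $<_{\partial}$, the ordinary Buchberger reduction driven by the divisibilities $v_{g_{i_{\alpha}}}\mid v_{f}$ produces a representation
\[
 f\,=\,\sum_{\alpha}c_{\alpha}\theta_{\alpha}g_{i_{\alpha}}\qquad (c_{\alpha}\in K,\;\theta_{\alpha}\in\Theta)
\]
in which every summand satisfies $\theta_{\alpha}v_{g_{i_{\alpha}}}\le_{\partial}v_{f}$, since each reduction step realises equality with the current $\partial$-leader and the $\partial$-leader is monotonically non-increasing along the process. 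Among all such representations I would choose one minimising lexicographically the pair $(u,N_{u})$, where $u=\max_{<_{x}}\{\theta_{\alpha}u_{g_{i_{\alpha}}}\}$ and $N_{u}$ counts the indices $\alpha$ attaining this maximum. Because every monomial occurring in the product $\theta_{\alpha}g_{i_{\alpha}}$ is $\le_{x}\theta_{\alpha}u_{g_{i_{\alpha}}}\le_{x}u$ (the commutator corrections coming from $\partial_{i}x_{i}=x_{i}\partial_{i}+1$ live in strictly smaller $x$- and $\partial$-bidegree), one automatically has $u_{f}\le_{x}u$.

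I claim $u=u_{f}$. Suppose not; then $u_{f}<_{x}u$, and, setting $I=\{\alpha:\theta_{\alpha}u_{g_{i_{\alpha}}}=u\}$, the coefficient of $u$ in $f$ must vanish, so the partial sum $\tilde f=\sum_{\alpha\in I}c_{\alpha}\theta_{\alpha}g_{i_{\alpha}}$ has $u_{\tilde f}<_{x}u$. The preceding theorem is then applied to $\tilde f$ with $v_{f}$ playing the role of the common $\partial$-ceiling for the coefficients $\theta_{\alpha}v_{g_{i_{\alpha}}}$ ($\alpha\in I$) --- its telescoping proof goes through verbatim with any common upper bound, not specifically $v_{\tilde f}$ --- producing
\[
 \tilde f\,=\,\sum_{\nu,k} d_{\nu k}\theta_{\nu k}\,S_{x}(g_{i_{\nu}},g_{i_{k}})
\]
with $\theta_{\nu k}u_{S_{x}(g_{i_{\nu}},g_{i_{k}})}<_{x}u$ and $\theta_{\nu k}v_{S_{x}(g_{i_{\nu}},g_{i_{k}})}\le_{\partial}v_{f}$. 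Invoking the hypothesis $S_{x}(g_{i_{\nu}},g_{i_{k}})\xrightarrow[x,\partial]{G}0$, each such $S$-polynomial admits a representation $\sum_{l}e_{l}\eta_{l}g_{j_{l}}$ with $\eta_{l}u_{g_{j_{l}}}\le_{x}u_{S_{x}(g_{i_{\nu}},g_{i_{k}})}$ and $\mathrm{ord}_{\partial}(\eta_{l}v_{g_{j_{l}}})\le\mathrm{ord}_{\partial}v_{S_{x}(g_{i_{\nu}},g_{i_{k}})}$, both controls being built into the definition of an $(x,\partial)$-reduction step. Multiplying through by $\theta_{\nu k}$ and substituting back into $f$ yields a new admissible representation of $f$ whose largest $x$-leader is strictly $<_{x}u$, contradicting the minimality of $(u,N_{u})$.

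Hence $u=u_{f}$, and some $\alpha$ satisfies $\theta_{\alpha}u_{g_{i_{\alpha}}}=u_{f}$, giving $u_{g_{i_{\alpha}}}\mid u_{f}$. Comparing $\mathrm{ord}_{\partial}$ in that equality with $\mathrm{ord}_{\partial}\theta_{\alpha}+\mathrm{ord}_{\partial}v_{g_{i_{\alpha}}}\le\mathrm{ord}_{\partial}v_{f}$ (read off from $\theta_{\alpha}v_{g_{i_{\alpha}}}\le_{\partial}v_{f}$) yields $d(g_{i_{\alpha}})\le d(f)$, and therefore $\rho(g_{i_{\alpha}})\mid\rho(f)$, as required. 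The main technical obstacle will be the bookkeeping in the substitution step: one must verify that replacing each $S$-polynomial by its $(x,\partial)$-reduction neither re-introduces any term of $x$-leader $u$ nor breaks the $\partial$-ceiling $\le_{\partial}v_{f}$. This rests on the strict drop of both the $x$- and $\partial$-bidegrees under the commutator corrections in the Weyl algebra together with the monotonicity (of the $x$-leader and of $\mathrm{ord}_{\partial}$ of the $\partial$-leader) that is built into the reduction arrow $\xrightarrow[x,\partial]{G}$.
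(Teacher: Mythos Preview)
Your proposal is correct and follows essentially the same route as the paper's proof: start from a $<_{\partial}$-Gr\"obner-basis representation of $f$ with $\partial$-ceiling $v_{f}$, minimise the maximal $x$-leader $u$ over such representations, assume $u_{f}<_{x}u$, apply the preceding $S$-polynomial rewriting theorem to the top part $\tilde f$, substitute the $(x,\partial)$-reductions of the $S_{x}$-polynomials, and contradict minimality. Your explicit remark that the telescoping proof of Theorem~4.10 works with any common $\partial$-upper bound (not necessarily $v_{\tilde f}$) is a useful clarification; the paper handles this implicitly via the chain of $ord_{\partial}$-inequalities leading to (4.11).
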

\begin{proof}
Notice that it is sufficient to prove that under the conditions of the theorem every element $f\in N$ can be represented as
\begin{equation}
f = \sum_{i=1}^{r}h_{i}g_{i}
\end{equation}
where $h_{1},\dots, h_{r}\in A_{n}(K)$,
\begin{equation}
max_{<_{x}}\{u_{h_{i}}u_{g_{i}}\,|\,1\leq i\leq r\} = u_{f},
\end{equation}
(symbol $\max_{<_{x}}$ indicates that the maximum is taken with respect to the order $<_{x}$) and
\begin{equation}
ord_{\partial}(v_{h_{i}}v_{g_{i}})\leq ord_{\partial}v_{f}.
\end{equation}
Indeed, with the notation of Definition 4.3, if the above conditions hold, then $\rho(f)$ is divisible by $\rho(g_{i})$ where $g_{i}$ gives the maximum value in the left-hand side of (4.3).

Let $f\in N$. Since $G$ is a Gr\"obner basis with respect to $<_{\partial}$, one can write $f$ as
\begin{equation}
f = \sum_{i=1}^{r}H_{i}g_{i}
\end{equation}
where $H_{1},\dots, H_{r}\in A_{n}(K)$ and
\begin{equation}
max_{<_{\partial}}\{v_{H_{i}}v_{g_{i}}\,|\,1\leq i\leq r\} = v_{f},
\end{equation}

Let us choose among all representations of the form (4.5) with condition (4.6) a representation with the smallest with respect to $<_{x}$ possible term $u = \max_{<_{x}}\{u_{H_{i}}u_{g_{i}}\,|\,1\leq i\leq r\}$. Setting $d_{i} = lc_{k}(H_{i})$ ($1\leq i\leq r$) and breaking the sum (4.5) in three parts we can write
\begin{equation}
f = \sum_{u_{H_{i}}u_{g_{i}}=u}d_{i}u_{H_{i}}g_{i} + \sum_{u_{H_{i}}u_{g_{i}}=u}(H_{i} - d_{i}u_{H_{i}})g_{i} + \sum_{u_{H_{i}}u_{g_{i}}<_{x} u}H_{i}g_{i}\,.
\end{equation}
Note that if $u = u_{f}$, then the expression (4.7) satisfies conditions (4.2) - (4.4). Indeed, by (4.6) we have $v_{f} = \max_{<_{\partial}}\{v_{H_{i}}v_{g_{i}}\,|\,1\leq i\leq r\}$, hence $\max\{ord_{\partial}(v_{H_{i}}v_{g_{i}})\,|\,1\leq i\leq r\}\leq ord_{\partial}v_{f}$. Suppose that $u_{f} <_{x} u$. Since $u = \max_{<_{x}}\{u_{H_{i}}u_{g_{i}}\,|\,1\leq i\leq r\}$, we have $u_{H_{i}-d_{i}H_{i}} <_{x} u$ ($1\leq i\leq r$) whence the $x$-leader of the second sum in (4.7) does not exceed $u$ with respect to $<_{x}$. Furthermore, it is clear that $u_{H_{i}}u_{g_{i}}=u$ for any term in the sum
\begin{equation}
\tilde{f} = \sum_{u_{H_{i}}u_{g_{i}}=u}d_{i}u_{H_{i}}g_{i}
\end{equation}
and $ord_{\partial}v_{\tilde{f}}\leq \max_{i\in I}\{ord_{\partial}(v_{H_{i}}v_{g_{i}})\}\leq ord_{\partial}v_{f}$ where $I$ denotes the set of all indices $i\in \{1,\dots, r\}$ that appear in (4.8).

Let $u_{ij} = lcm(u_{i}, u_{j})$ for any $i, j\in I$, $i\neq j$, and let $\theta_{ij}={\D\frac{u}{u_{ij}}}\in\Theta$ ($u_{ij}|u$, since $u= u_{H_{i}}u_{g_{i}}$ for every $i\in I$.) By Theorem 4.10, there exist elements $c_{ij}\in K$ such that
\begin{equation}
\tilde{f} = \sum_{i, j}c_{ij}\theta_{ij}S_{x}(g_{i}, g_{j})
\end{equation}
where $u_{\theta_{ij}S_{x}(g_{i}, g_{j})} <_{x} u_{\tilde{f}} = u$ and $ord_{\partial}v_{\theta_{ij}S_{x}(g_{i}, g_{j})}\leq ord_{\partial}v_{\tilde{f}}$.

Since $S_{x}(g_{i}, g_{j})\xrightarrow[x,\partial]{\text{$G$}} 0$, there exist $q_{\nu ij}\in A_{n}(K)$ such that $S_{x}(g_{i}, g_{j}) = \D\sum_{\nu = 1}^{r}q_{\nu ij}g_{\nu}$ and $u_{q_{\nu ij}}u_{g_{\nu}}\leq_{x}u_{S_{x}(g_{i}, g_{j})}$,\, $ord_{\partial}(v_{q_{\nu ij}}v_{g_{\nu}}\leq ord_{\partial}v_{S_{x}(g_{i}, g_{j})}$.

Thus, for any indices $i, j$ in sum (4.9), one has $$\theta_{ij}S_{x}(g_{i}, g_{j}) = \sum_{\nu = 1}^{r}(\theta_{ij}q_{\nu ij})g_{\nu}$$
where $u_{\theta_{ij}q_{\nu ij}}u_{g_{\nu}} = \theta_{ij}u_{q_{\nu ij}}u_{g_{\nu}}\leq_{x}\theta_{ij}u_{S_{x}(g_{i}, g_{j})} <_{x} u$. It follows that \begin{equation}
\tilde{f} = \sum_{i, j}c_{ij}\sum_{\nu = 1}^{r}(\theta_{ij}q_{\nu ij})g_{\nu} = \sum_{\nu = 1}^{r}\left(\sum_{i, j}c_{ij}\theta_{ij}q_{\nu ij}\right)g_{\nu} = \sum_{\nu = 1}^{r}\tilde{H}_{\nu}g_{\nu}
\end{equation}
where $\tilde{H}_{\nu} = \sum_{i, j}c_{ij}\theta_{ij}q_{\nu ij}$ and $u_{\tilde{H}_{\nu}}u_{g_{\nu}} <_{x} u$\, ($1\leq \nu\leq r$). Furthermore,

\medskip

$ord_{\partial}(v_{\tilde{H}_{\nu}}v_{g_{\nu}})\leq \max_{i, j}\{ord_{\partial}(\theta_{ij}q_{\nu ij}v_{g_{\nu}})\}\leq$

$\max_{i, j}\{\max\{ord_{\partial}\left(\theta_{ij}{\D\frac{u_{ij}}{u_{g_{i}}}}v_{g_{i}}\right), \left(\theta_{ij}{\D\frac{u_{ij}}{u_{g_{j}}}}v_{g_{j}}\right)\}\} = $

$\max_{i, j}\{\max\{ord_{\partial}\left({\D\frac{u}{u_{g_{i}}}}v_{g_{i}}\right), ord_{\partial}\left({\D\frac{u}{u_{g_{j}}}}v_{g_{j}}\right)\leq ord_{\partial}u =$

$ord_{\partial}u_{\tilde{f}}\leq ord_{\partial}v_{\tilde{f}}$,\, so that representation (4.10) satisfies the condition
\begin{equation}
ord_{\partial}(v_{\tilde{H}_{\nu}}v_{g_{\nu}})\leq ord_{\partial}v_{\tilde{f}}\leq ord_{\partial}v_{f}
\end{equation}
for $\nu=1,\dots, r$. Substituting (4.10) into (4.7) we obtain
\begin{equation}
f = \sum_{\nu=1}^{r}\tilde{H}_{\nu}g_{\nu} + \sum_{u_{H_{i}}u_{g_{i}}=u}(H_{i} - d_{i}u_{H_{i}})g_{i} + \sum_{u_{H_{j}}u_{g_{j}}<_{x} u}H_{j}g_{j}
\end{equation}
where, denoting each $H_{i} - d_{i}u_{H_{i}}$ in the second sum by $H'_{i}$, we have the following conditions: $u_{\tilde{H}_{\nu}}u_{g_{\nu}} <_{x}u$ ($1\leq \nu\leq r$), $u_{H'_{i}}u_{g_{i}}<_{x}u$ for any term with index $i$ in the second sum of (4.12), and $u_{H_{j}}u_{g_{j}}<_{x}u$ for any term with index $j$ in the third sum of (4.12). We also have inequality (4.11) for the first sum, the inequality $ord_{\partial}(v_{H'_{i}}v_{g_{i}})\leq ord_{\partial}(v_{H_{i}}v_{g_{i}})\leq ord_{\partial}v_{f}$ (see (4.6)\,) for every index $i$ in the second sum and the inequality $ord_{\partial}(v_{H_{j}}v_{g_{j}})\leq ord_{\partial}v_{f}$ for every index $j$ in the third sum of (4.12). Thus, (4.12) is a representation of $f$ in the form (4.5) with condition (4.6) such that if one writes (4.12) as $f = \D\sum_{i=1}^{r}\tilde{H}'_{i}g_{i}$ (combining the sums in (4.12)\,), then $\max_{<_x}\{u_{\tilde{H}'_{1}}u_{g_{1}}, \dots, u_{\tilde{H}'_{r}}u_{g_{r}}\} <_{x} u$ and one has condition (4.6). We have arrived at a contradiction with our choice of representation (4.5) with condition (4.6) and the smallest with respect to $<_{x}$ possible value of \ $u = \max\{u_{H_{i}}u_{g_{i}}\,|\,1\leq i\leq r\}$. Thus, every element $f\in N$ can be written in the form (4.2) with conditions (4.3) and (4.4). This completes the proof of the theorem.
\end{proof}
The last theorem allows one to construct an $(x, \partial)$-Gr\"obner basis of an $A_{n}(K)$-submodule of $E$ starting with the usual Gr\"obner basis of $N$ with respect to the term order $<_{\partial}$.

\section{Bivariate dimension polynomials of $A_{n}(K)$-modules and their invariants}

In what follows we consider the ring $A_{n}(K)$ as a bifiltered ring with respect to the natural bifiltration $(W_{rs})_{r,s\in {\bf Z}}$ introduced at the beginning of the preceding section. Recall that  $W_{rs}=0$, if at least one of the numbers $r$, $s$ is negative, and if $r\geq 0$, $s\geq 0$, then $W_{rs}$ is a vector $K$-space generated by the set $\Theta(r,s) = \{\theta \in \Theta | ord_{x}\theta \leq r, ord_{\partial}\theta \leq s\}$. It follows from the third statement of Proposition 3.3 that $dim_{K}W_{rs} = Card\,\Theta(r,s) = \D{{r+n}\choose m}{{s+n}\choose n}$ for any $r, s\in {\bf N}$.
\begin{definition}
Let $M$ be a module over a Weyl algebra $A_{n}(K)$.  A bisequence $(M_{rs})_{r,s\in {\bf Z}}$ of vector $K$-subspaces of the module $M$ is called a {\em bifiltration\/} of $M$ if the following three conditions hold:

{\em (i)}\, If $r\in {\bf Z}$ is fixed, then $M_{rs}\subseteq M_{r, s+1}$ for all $s\in {\bf Z}$ and  $M_{rs}=0$ for all sufficiently small $s\in {\bf Z}$. Similarly, if $s\in {\bf Z}$ is fixed, then $M_{rs}\subseteq M_{r+1, s}$ for all $r\in {\bf Z}$ and $M_{rs}=0$ for all sufficiently small $r\in {\bf Z}$.

{\em (ii)}\, $\bigcup \{M_{rs}|r,s\in {\bf Z}\}=M$.

{\em (iii)}\,  $W_{kl}M_{rs}\subseteq M_{r+k,s+l}$ for any $r,s\in {\bf Z}$,  $k,l\in {\bf N}$.
\end{definition}
\begin{example}
Let $M$ be a finitely generated $A_{n}(K)$-module with generators $f_{1},\dots,f_{m}$. Then the vector $K$-spaces $M_{rs}=\sum_{i=1}^{m} W_{rs}f_{i}$ ($r,s\in {\bf Z}$) form a bifiltration of the module $M$.  This bifiltration is called a {\em natural bifiltration\/} of $M$ associated with the system of generators $f_{1},\dots,f_{m}$. It is easy to see that every component $M_{rs}$ of the this bifiltration is a finitely generated vector $K$-space and $W_{kl}M_{rs} = M_{r+k, s+l}$ for any $r, s, k, l\in {\bf N}$.
\end{example}
In what follows we use the properties of $(x, \partial)$-Gr\"obner bases to prove the existence and obtain a method of computation of bivariate dimension polynomials of finitely generated $A_{n}(K)$-modules. The following result can be considered as the main step in this direction.
\begin{theorem}
Let $M$ be a finitely generated $A_{n}(K)$-module with a system of generators \{$f_{1}, \dots , f_{m}$\}, $E$ a free $A_{n}(K)$-module with a basis $e_{1}, \dots , e_{m}$, and $\pi:E \longrightarrow M$ the natural $A_{n}(K)$-epimorphism of $E$ onto $M$ $(\pi(e_{i})=f_{i}$ for $i=1,\dots,m)$.
Furthermore, let $N=Ker\,\pi$ and let $G = \{g_{1}, \dots , g_{d}\}$ be an $(x, \partial)$-Gr\"obner basis of $N$. Finally, for any $r, s \in {\bf N}$, let $M_{rs} =\sum_{i=1}^{m}W_{rs}f_{i}$, and let $U_{rs}$ denote the set \{$w\in \Theta e|ord_{x}w\leq r, ord_{\partial}w\leq s$, and either
$w$ is not a multiple of any $u_{g_{i}}$ $(1\leq i\leq d)$ or $ord_{\partial}(\theta v_{g_{j}})>s$ for any $\theta \in \Theta, g_{j}\in G$ such that $w = \theta u_{g_{j}}$\}. Then $\pi(U_{rs})$ is a basis of the vector $K$-space $M_{rs}$.
\end{theorem}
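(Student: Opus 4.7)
The plan is to establish separately that $\pi(U_{rs})$ spans $M_{rs}$ over $K$ and that it is $K$-linearly independent.

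For spanning, I would start from the observation that every element of $M_{rs}$ has the form $\pi(f)$ for some $f\in E$ all of whose terms $\theta e_i$ satisfy $ord_x\theta\leq r$ and $ord_\partial\theta\leq s$; denote the set of such $f$'s by $E_{rs}$. Given such an $f$, I iteratively rewrite it: let $w$ be the $<_x$-largest term of $f$ that violates the defining condition of $U_{rs}$, so $w=\theta u_{g_j}$ for some $g_j\in G$, $\theta\in\Theta$, with $ord_\partial(\theta v_{g_j})\leq s$; then replace $f$ by $f-c(lc_x(g_j))^{-1}\theta g_j$, where $c$ is the coefficient of $w$ in $f$. Two facts must be checked. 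First, since $u_{g_j}$ and $v_{g_j}$ attain the maximal $x$- and $\partial$-orders occurring in $g_j$, every term of $\theta g_j$ has $x$-order at most $ord_x w\leq r$ and $\partial$-order at most $ord_\partial(\theta v_{g_j})\leq s$, so the new element remains in $E_{rs}$. Second, multiplication by $\theta$ preserves the order $<_x$ among the formal products $\theta\tau_k e_{i_k}$ arising from the terms of $g_j$, and the commutator corrections from $\partial_i x_i=x_i\partial_i+1$ strictly decrease both $x$- and $\partial$-orders; hence every term of $\theta g_j$ other than $lc_x(g_j)w$ is strictly $<_x w$. Consequently $w$ is eliminated and no term $>_x w$ is created, so the $<_x$-largest bad term strictly decreases at each step. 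Finiteness of $\Theta(r,s)e$ ensures termination with an $f'\in E_{rs}$ whose terms all lie in $U_{rs}$ and satisfies $\pi(f')=\pi(f)$.

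For independence, suppose $\sum_{w\in U_{rs}}a_w\pi(w)=0$ with $a_w\in K$ not all zero. Then $f:=\sum_w a_w w$ is a nonzero element of $N=\ker\pi$ whose terms all lie in $U_{rs}$; in particular $ord_\partial v_f\leq s$. Since $G$ is an $(x,\partial)$-Gr\"obner basis of $N$, Definition 4.3 yields $g_i\in G$ with $\rho(g_i)\,|\,\rho(f)$, so that $u_{g_i}\,|\,u_f$ in $\Theta e$ and $d(g_i)\leq d(f)$. Writing $u_f=\theta u_{g_i}$ and using $ord_\partial u_f=ord_\partial\theta+ord_\partial u_{g_i}$, the inequality $d(g_i)\leq d(f)$ unfolds as $ord_\partial v_{g_i}+ord_\partial\theta\leq ord_\partial v_f$, i.e. $ord_\partial(\theta v_{g_i})\leq ord_\partial v_f\leq s$. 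This contradicts $u_f\in U_{rs}$, so every $a_w$ must vanish; the same argument, applied to $w-w'$, shows that $\pi$ is injective on $U_{rs}$.

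I expect the main obstacle to be the bookkeeping in the spanning step: one must simultaneously track the non-commutative multiplication in $A_n(K)$, which introduces lower-order corrections through $\partial_i x_i=x_i\partial_i+1$, and the two bifiltration bounds $r$ and $s$, and then verify that these corrections cannot promote any term above $w$ in the order $<_x$. Both points rest on the decision to order by $ord_x$ first and $ord_\partial$ second in the definition of $<_x$ (and symmetrically for $<_\partial$), which is precisely why the bifiltration and the orderings were constructed together in Section 4.
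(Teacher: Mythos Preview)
Your proof is correct and follows essentially the same approach as the paper: the spanning argument is the paper's inductive rewriting (phrased as an iterative reduction in $E$ before applying $\pi$, rather than working in $M$ directly), and the independence argument is the same reduction-to-zero via the $(x,\partial)$-Gr\"obner basis property. The only cosmetic difference is that for independence the paper verifies the element is $(x,\partial)$-reduced with respect to $G$ and invokes Theorem~4.8(ii), whereas you unfold Definition~4.3 directly on the $x$-leader $u_f$ to obtain the same contradiction.
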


\begin{proof}
Let us prove, first, that every element $\theta f_{i}$ ($1\leq i\leq m$, $\theta \in \Theta(r,s)$), that does not belong to $\pi(U_{rs})$, can be written as a finite linear combination of elements of $\pi(U_{rs})$ with coefficients in $K$ (so that the set $\pi(U_{rs})$ generates the vector $K$-space $M_{rs}$). Since $\theta f_{i}\notin \pi(U_{rs})$, $\theta e_{i}\notin U_{rs}$ whence $\theta e_{i}=\theta'u_{g_{j}}$ for some $\theta'\in \Theta$, $1\leq j\leq d$, such that $ord_{\partial}(\theta'v_{g_{j}})\leq s$.  Let us consider the element $g_{j}=a_{j}u_{g_{j}}+ \dots $\, ($a_{j}\in K, a_{j}\ne 0$), where dots are placed instead of the other terms that appear in $g_{j}$ (obviously, those terms are less than $u_{g_{j}}$ with respect to the order $<_x$). Since $g_{j}\in N=Ker\,\pi$, $\pi(g_{j})=a_{j}\pi(u_{g_{j}})+ \dots = 0$, whence $\pi(\theta'g_{j})=a_{j}\pi(\theta'u_{g_{j}})+ \dots = a_{j}\pi(\theta e_{i})+ \dots = a_{j}\theta f_{i}+ \dots = 0$, so that $\theta f_{i}$ is a finite linear combination with coefficients in $K$ of some elements $\tilde{\theta}f_{k}$ ($1\leq k\leq m$) such that $\tilde{\theta}\in \Theta(r,s)$ and $\tilde{\theta}e_{k} <_{x} \theta'u_{g_{j}}$.
($ord_{x}\tilde{\theta}\leq r$, since $\tilde{\theta}e_{k} <_{x} \theta e_{i}$ and $\theta \in \Theta(r,s)$; $ord_{\partial}\tilde{\theta}\leq s$, since $\tilde{\theta}e_{k} \leq _{\partial} v_{\theta'g_{j}}=\theta'v_{g_{j}}$ and $ord_{\partial}(\theta'v_{g_{j}})\leq s$).  Thus, we can apply the
induction on $\theta e_{j}$ ($\theta \in \Theta, 1\leq j\leq p$) with
respect to the order $<_{x}$ and obtain that every element $\theta f_{i}$ ($\theta \in \Theta(r,s), 1\leq j\leq m$) can be written as a finite linear combination of elements of $\pi(U_{rs})$ with coefficients from the field $K$.

Now, let us prove that the set  $\pi(U_{rs})$ is linearly independent over $K$.  Let $\sum_{i=1}^{q}{a_{i}\pi(u_{i})} = 0$ for some $u_{1},\dots, u_{q}\in U_{rs},\,  a_{1},\dots, a_{q}\in K$. Then $h=\sum_{i=1}^{q}{a_{i}u_{i}}$ is an element of $N$ $(x, \partial)$-reduced with respect to $G$. Indeed, if an element $u=\theta e_{j}$ appears in $h$ (so that $u=u_{i}$ for some $i=1,\dots, q$), then either $u$ is not a multiple of any $u_{g_{j}}$ ($1\leq j\leq d$) or $u=\theta u_{g_{k}}$ for some $\theta \in \Theta,\, 1\leq k\leq d$, such that $ord_{\partial}(\theta v_{g_{k}}) > s \geq ord_{\partial}v_{h}$
(since $v_{h}$ is one of the elements $u_{1}, \dots, u_{q}$ that lie in $U_{rs}$). Applying Theorem 4.8 we obtain that $h = 0$, whence $a_{1}= \dots =a_{q} = 0$.  This completes the proof of the theorem.
\end{proof}
Theorem 5.3 leads to the following existence theorem, which is the main result of this section.
\begin{theorem}
Let $M$ be a finitely generated $A_{n}(K)$-module with a system of generators $\{f_{1}, \dots , f_{m}\}$ and let $(M_{rs})_{r, s\in {\bf Z}}$
be the corresponding natural bifiltration of $M$ $(M_{rs}=\sum_{i=1}^{p}W_{rs}f_{i}$ for $r, s\in {\bf Z}.)$ Then there exists a numerical polynomial
$\phi_{M}(t_{1},t_{2})$ in two variables $t_{1}, t_{2}$ such that

{\em (i)}\, $\phi_{M}(r,s)=dim_{K} M_{rs}$ for all sufficiently large $(r,s)\in {\bf Z}^2$. {\em (}It means that there exist $r_{0}, s_{0}\in {\bf Z}$ such that the equality holds for all $r\geq r_{0},\, s\geq s_{0}$.{\em)}

{\em (ii)}\, $deg_{t_{1}}\phi_{M}(t_{1},t_{2}) \leq n$ and $deg_{t_{2}}\phi_{M}(t_{1},t_{2}) \leq n$, so that $deg\,\phi_{M}(t_{1},t_{2})\leq 2n$ and the polynomial $\phi_{M}(t_{1},t_{2})$ can be represented as
\begin{equation}
\phi(t_{1}, t_{2}) = \sum_{i=0}^{n}\sum_{j=0}^{n}a_{ij}{{t_{1}+i}\choose i}{{t_{2}+j}\choose j}
\end{equation} where $a_{ij}\in {\bf Z}$ for all $i, j$.
\end{theorem}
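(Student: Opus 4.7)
The plan is to combine Theorem 5.3 with an inclusion-exclusion count in the spirit of Proposition 3.5. Fix an $(x,\partial)$-Gr\"obner basis $G = \{g_{1},\ldots,g_{d}\}$ of $N = \ker\pi$ (which exists by combining Theorem 4.11 with the standard existence of a $<_{\partial}$-Gr\"obner basis). Theorem 5.3 then gives $\dim_{K}M_{rs} = \mathrm{Card}\,U_{rs}$, and splitting $U_{rs}$ according to the generator index yields $\mathrm{Card}\,U_{rs} = \sum_{i=1}^{m}\mathrm{Card}\,U_{rs}^{(i)}$ where $U_{rs}^{(i)} = \{w \in U_{rs} : w \in \Theta e_{i}\}$. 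So it suffices to count, for each $i$, the multi-indices $(\alpha,\beta)\in \mathbb{N}^{2n}$ with $|\alpha|\le r$, $|\beta|\le s$ satisfying a finite list of forbidden conditions indexed by $G_{i} := \{g_{j} \in G : u_{g_{j}} \in \Theta e_{i}\}$.

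For each $g_{j}\in G_{i}$, write $u_{g_{j}} = x^{\alpha^{(j)}}\partial^{\beta^{(j)}}e_{i}$ and set $d_{j} = \mathrm{ord}_{\partial}v_{g_{j}} - |\beta^{(j)}| \ge 0$. Using $w/u_{g_{j}} = x^{\alpha-\alpha^{(j)}}\partial^{\beta-\beta^{(j)}}$, one computes $\mathrm{ord}_{\partial}\bigl((w/u_{g_{j}})\, v_{g_{j}}\bigr) = |\beta|+d_{j}$, so the negation of the defining condition for $U_{rs}^{(i)}$ reads: there exists $g_{j}\in G_{i}$ with $\alpha^{(j)}\le \alpha$, $\beta^{(j)}\le \beta$ componentwise, and $|\beta| \le s - d_{j}$. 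Call this bad set $B_{j}$. Note in particular that since $|\beta|\le s$ already holds, the extra inequality $|\beta|\le s-d_{j}$ is vacuous when $d_{j}=0$ and tightens only when $d_{j}>0$.

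Applying inclusion-exclusion to $\bigcup_{j}B_{j}$, for any $\sigma\subseteq G_{i}$ the intersection $\bigcap_{j\in\sigma}B_{j}$ consists of pairs $(\alpha,\beta)$ with $\alpha\ge \bar\alpha_{\sigma}$, $\beta\ge \bar\beta_{\sigma}$ componentwise, $|\alpha|\le r$, and $|\beta|\le s - d_{\sigma}$, where $\bar\alpha_{\sigma}$ and $\bar\beta_{\sigma}$ are the componentwise maxima of the $\alpha^{(j)}$ and $\beta^{(j)}$ for $j\in\sigma$, and $d_{\sigma} = \max_{j\in\sigma}d_{j}$. For $r, s$ large enough, the shift $\alpha' = \alpha - \bar\alpha_{\sigma}$, $\beta' = \beta - \bar\beta_{\sigma}$ reduces the intersection to pairs $(\alpha',\beta')\in\mathbb{N}^{2n}$ with $|\alpha'|\le r-|\bar\alpha_{\sigma}|$ and $|\beta'|\le s - d_{\sigma}-|\bar\beta_{\sigma}|$, giving cardinality $\binom{r+n-|\bar\alpha_{\sigma}|}{n}\binom{s+n-d_{\sigma}-|\bar\beta_{\sigma}|}{n}$. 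Summing and adopting the conventions $\bar\alpha_{\emptyset}=\bar\beta_{\emptyset}=0$ and $d_{\emptyset}=0$, one obtains
\[
\dim_{K}M_{rs} \;=\; \sum_{i=1}^{m}\sum_{\sigma\subseteq G_{i}}(-1)^{|\sigma|}\binom{r+n-|\bar\alpha_{\sigma}|}{n}\binom{s+n-d_{\sigma}-|\bar\beta_{\sigma}|}{n}
\]
for all sufficiently large $(r,s)$. Define $\phi_{M}(t_{1},t_{2})$ as the right-hand side with $r,s$ replaced by $t_{1},t_{2}$. Each summand has degree at most $n$ in each variable, so $\deg_{t_{1}}\phi_{M}\le n$ and $\deg_{t_{2}}\phi_{M}\le n$; Proposition 3.1 then converts $\phi_{M}$ into the canonical form $(5.1)$ with integer coefficients $a_{ij}$.

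The main subtlety is the second step. Unlike in Proposition 3.5, the condition defining $U_{rs}^{(i)}$ is not purely of the ``not greater than'' type: it mixes componentwise divisibility of $x$-leaders with a total-degree bound on the $\partial$-order of the image under $v_{g_{j}}$. The key simplifying observation is the identity $\mathrm{ord}_{\partial}\bigl((w/u_{g_{j}})\, v_{g_{j}}\bigr) = |\beta|+d_{j}$, which collapses the non-componentwise $\partial$-condition into a single linear constraint $|\beta|\le s - d_{j}$ on the sum of the $\beta$-coordinates. Once this is established the inclusion-exclusion unfolds exactly as in Proposition 3.5, with $s$ effectively replaced by $s - d_{\sigma}$ on each sub-intersection.
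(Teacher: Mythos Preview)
Your argument is correct and follows the same overall strategy as the paper: invoke Theorem~5.3 to identify $\dim_{K}M_{rs}$ with $\mathrm{Card}\,U_{rs}$, then express this cardinality via inclusion--exclusion as an explicit integer combination of products $\binom{t_{1}+n-\ast}{n}\binom{t_{2}+n-\ast}{n}$, from which the degree bounds and the canonical representation (Proposition~3.2) are immediate. The identity $\mathrm{ord}_{\partial}\bigl((w/u_{g_{j}})v_{g_{j}}\bigr)=|\beta|+d_{j}$ that you isolate is exactly what makes the count tractable, and your verification that $d_{\sigma}=\max_{j\in\sigma}d_{j}$ governs the intersection is right.

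Where you differ from the paper is in the organization of the count. The paper splits $U_{rs}=U'_{rs}\sqcup U''_{rs}$, with $U'_{rs}$ the terms divisible by no $u_{g_{i}}$ (handled by Proposition~3.3) and $U''_{rs}$ the terms divisible by some $u_{g_{i}}$ but failing the $\partial$-condition for every such divisor; it then expresses $U''_{rs}$ as a union of sets $[\Theta(r-a_{i},s-b_{i})\setminus\Theta(r-a_{i},s-c_{i})]u_{g_{i}}$ and runs inclusion--exclusion on that union, producing the formula~(5.2). You instead pass directly to the complement $\Theta(r,s)e\setminus U_{rs}=\bigcup_{j}B_{j}$ and apply a single inclusion--exclusion. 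Your route is more economical: the complement is naturally a union of sets of identical shape, so the bookkeeping is uniform and the $U'/U''$ split never appears. The paper's route has the minor advantage of plugging directly into the already-established $(m,n)$-dimension polynomial of Proposition~3.3 for $U'_{rs}$, at the cost of a more delicate description of $U''_{rs}$ (where one must keep track of the ``for \emph{every} divisor'' quantifier). Both decompositions yield the same polynomial $\phi_{M}$, since it is uniquely determined by its values at large $(r,s)$.
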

\begin{proof}
Let $E$ be a free $A_{n}(K)$-module with a basis $e_{1}, \dots , e_{m}$, let $N$ be the kernel of the natural epimorphism $\pi:E \longrightarrow M$, and let the set $U_{rs}$ ($r, s\in {\bf N}$) be the same as in the conditions of Theorem 5.3. Furthermore, let $G = \{g_{1}, \dots , g_{d}\}$ be an  $(x, \partial)$-Gr\"obner basis of $N$. By Theorem 5.3, for any $r, s\in{\bf N}$,\,  $\pi(U_{rs})$ is a basis of the vector $K$-space  $M_{rs}$. Therefore, $dim_{K} M_{rs} = Card\,\pi(U_{rs}) = Card\,U_{rs}$. (It was shown in the second part of the proof of Theorem 5.3 that the restriction of the mapping $\pi$ on $U_{rs}$ is bijective.) 

Let $U'_{rs}$ = \{$w\in U_{rs}|$ $w$ is not a multiple of any element $u_{g_{i}}$ ($1\leq i\leq d$)\} and $U''_{rs}$ = \{$w\in U_{rs}|$ $w=\theta u_{g_{j}}$ for some $g_{j}$ ($1\leq j\leq d$) and $\theta \in \Theta$ such that $ord_{\partial}(\theta v_{g_{j}}) > s$\}. Then $U_{rs}=U'_{rs}\bigcup U''_{rs}$ and $U'_{rs}\bigcap U''_{rs} = \emptyset$, whence $Card\,U_{rs} = Card\,U'_{rs} +  Card\,U''_{rs}$.

By Proposition 3.3, there exists a numerical polynomial $\omega (t_{1}, t_{2})$ in two variables $t_{1}$ and $t_{2}$ such that $\omega (r, s) =
Card\,U'_{rs}$ for all sufficiently large  $(r,s)\in {\bf N}^2$. In order to express $Card\,U''_{rs}$ in terms of $r$ and $s$, let us set
$a_{i} = ord_{x}u_{g_{i}}$, $b_{i} = ord_{\partial}u_{g_{i}}$, $c_{i} = ord_{\partial}v_{g_{i}}$, $a_{ij} = ord_{x}lcm(u_{g_{i}}, u_{g_{j}})$,\,
$b_{ij} = ord_{\partial}lcm(u_{g_{i}}, u_{g_{j}})$, $a_{ijk} = ord_{x}lcm(u_{g_{i}}, u_{g_{j}}, u_{g_{k}})$,\, $b_{ijk} = ord_{\partial}lcm(u_{g_{i}}, u_{g_{j}}, u_{g_{k}})$,\, $\dots $\,\,($1\leq i, j, k,\dots \leq d$). Then $U''_{rs} = \bigcup_{i=1}^{d}\{[\Theta(r - a_{i}, s - b_{i}) \setminus
\Theta(r - a_{i}, s - c_{i})]u_{g_{i}}\}$. By the combinatorial principle of inclusion and exclusion (see [5, Chapter 5, Theorem 5.1.1]),

$Card\,U''_{rs} = \D\sum_{i=1}^{d}Card\,\{[\Theta(r - a_{i}, s - b_{i})\setminus \Theta(r - a_{i}, s - c_{i})]u_{g_{i}}\} - $

\noindent$\D\sum_{1\leq i<j\leq d}Card\,\{[\Theta(r - a_{i}, s - b_{i}) \setminus\Theta(r - a_{i}, s - c_{i})]u_{g_{i}}\bigcap [\Theta(r - a_{j}, s - b_{j}) \setminus $
 
\noindent$\Theta(r - a_{j}, s - c_{j})]u_{g_{j}}\} + \D\sum_{1\leq i<j<k\leq d}Card\,\{[\Theta(r - a_{i}, s - b_{i})\setminus\Theta(r - a_{i}, s$

\medskip

\noindent$- c_{i})]u_{g_{i}}\bigcap[\Theta(r - a_{j}, s - b_{j})\setminus \Theta(r - a_{j}, s - c_{j})]u_{g_{j}} \bigcap [\Theta(r - a_{k}, s - b_{k})$

\medskip

\noindent$\setminus \Theta(r - a_{k}, s - c_{k})]u_{g_{k}}\} - \dots $.

\medskip

Furthermore, for any two different elements $g_{i}, g_{j}\in \Sigma$, we have 

\medskip

\noindent$Card\,\{[\Theta(r - a_{i}, s - b_{i})
\setminus \Theta(r - a_{i}, s - c_{i})]u_{g_{i}}\bigcap [\Theta(r - a_{j}, s - b_{j}) \setminus \Theta(r - a_{j},$

\medskip

\noindent$ s - c_{j})]u_{g_{j}}\} =
Card\,\{\theta\,lcm(u_{g_{i}}, u_{g_{j}}) | \theta \in \Theta, ord_{x}\theta \leq r-a_{ij},\, ord_{\partial}\theta \leq s-b_{ij},$

\medskip

\noindent$ord_{\partial}(\theta \frac{lcm(u_{g_{i}}, u_{g_{j}})}{u_{g_{i}}}v_{g_{i}}) = ord_{\partial}\theta + b_{ij} - b_{i} + c_{i} > s$ and $ord_{\partial}\theta + b_{ij} - b_{i} + c_{i} > s\}$
 
\medskip
 
\noindent$= Card\,\{\theta | \theta\in \Theta, ord_{x}\theta \leq r-a_{ij}, ord_{\partial}\theta \leq s-b_{ij}$\,\,  and 

\medskip

\noindent$ord_{\partial}\theta > s - \min\{c_{i}+b_{ij} - a_{i}, c_{j}+b_{ij} - a_{j}\} =$
 
\medskip
 
\noindent$\D{r + n - a_{ij}\choose m}\left[{s + n - b_{ij}\choose n} - \D{s + n - \min \{c_{i}+b_{ij} - b_{i}, c_{j}+b_{ij} - b_{j} \}\choose n}\right]$

\medskip

Similarly, for any three different elements $g_{i}, g_{j}, g_{k}\in \Sigma$ we obtain that 

\medskip

$Card\,\{[\Theta(r - a_{i}, s - b_{i}) \setminus \Theta(r - a_{i}, s - c_{i})]u_{g_{i}}\bigcap[\Theta(r - a_{j}, s - b_{j}) \setminus \Theta(r - a_{j}, s - c_{j})]u_{g_{j}}\bigcap[\Theta(r - a_{k}, s - b_{k}) \setminus \Theta(r - a_{k}, s - c_{k})]u_{g_{k}}\} = \D{r + n - a_{ijk}\choose n}[{s + n - b_{ijk}\choose n}$

\noindent$- \D{s + n - \min \{c_{i}+b_{ijk} - b_{i}, c_{j}+b_{ijk} - b_{j}, c_{k}+b_{ijk} - b_{k} \}\choose n}]$ and so on.

Thus, for all sufficiently large  $(r,s)\in {\bf N}^2$, $Card\,U''_{rs} = \bar{\omega}(r, s)$ where $\bar{\omega}(t_{1}, t_{2})$ is the following
numerical polynomial:
$$\bar{\omega}(t_{1}, t_{2}) =
\sum_{i=1}^{d}{t_{1}+n - a_{i}\choose n}\left[{t_{2} + n - b_{i}\choose n} -
{t_{2} +n - c_{i}\choose n}\right] -$$
$$\sum_{1\leq i < j\leq d}
{t_{1} + n - a_{ij}\choose n}\left[{t_{2} + n - b_{ij}\choose n}-
{t_{2} + n - \min \{c_{i}+b_{ij} - b_{i}, c_{j} + b_{ij} - b_{j} \}
\choose n}\right]$$
$$+ \sum_{1\leq i < j < k\leq d}
{t_{1} + n - a_{ijk}\choose n}[{t_{2} +n - b_{ijk}\choose n} -$$
\begin{equation}{t_{2} +n - \min \{c_{i}+b_{ijk} - b_{i}, c_{j}+b_{ijk} - b_{j},
c_{k}+b_{ijk} - b_{k}\}\choose n}] - \dots 
\end{equation}
It is clear now that the
numerical polynomial $\phi_{M}(t_{1},t_{2}) = \omega(t_{1}, t_{2}) +
\bar{\omega}(t_{1}, t_{2})$ has all the desired properties.
\end{proof}
\begin{definition}
Numerical polynomial $\phi_{M}(t_{1}, t_{2})$, whose existence is established by
Theorem {\em 5.4} , is called a characteristic polynomial of the
module $M$ associated with the system of generators $\{f_{1},\dots, f_{m}\}$
(or with the bifiltration $(M_{rs})_{r,s\in {\bf N}}$.)
\end{definition}
\begin{example}
{\em With the notation of Theorem 5.4, let $n=1$ and let an $A_{1}(K)$-module $M$ be generated by a single element $f$ that satisfies the defining
equation\, $x^{2}f + \partial^{2}f + x\partial f = 0$\,.  In other words, $M$ is a factor module of a free $A_{1}(K)$-module $E = A_{1}(K)e$ with a free generator $e$ by its $A_{1}(K)$-submodule $N = A_{1}(K)g$ where $g = (x^{2} + \partial^{2} + x\partial)e$. Clearly, $\{g\}$ is an 
$(x, \partial)$-Gr\"obner basis of $N$. Applying Proposition 3.5 (and using the notation of Theorem 5.4), we obtain that $u_{g} = x^{2}e$,
$v_{g}=\partial^{2}e$, and $$\omega(t_{1}, t_{2}) =
\omega_{(2, 0)}(t_{1}, t_{2}) = {{t_{1}+1}\choose 1}{{t_{2}+1}\choose 1} -
{{t_{1}+1-2}\choose 1}{{t_{2}+1}\choose 1} =2t_{2} + 2.$$ Furthermore,
formula (5.14) shows that $$\bar{\omega}(t_{1}, t_{2}) =
{{t_{1}+1-2}\choose 1}\left[{{t_{2}+1}\choose 1} - {{t_{2}+1-2}\choose 1}\right] =
2t_{1} - 2.$$ Thus, the characteristic polynomial of the module $M$
associated with the generator $f$ is as follows:
$$\phi_{M}(t_{1}, t_{2}) = \omega(t_{1}, t_{2}) + \bar{\omega}(t_{1}, t_{2})
= 2t_{1} + 2t_{2}.$$}
\end{example}
\begin{theorem} 
Let $M$ be a finitely generated $A_{n}(K)$-module and let $$\phi_{M}(t_{1}, t_{2}) = \sum_{i=0}^{n}\sum_{j=0}^{n} {a_{ij}{t_{1}+i\choose i}{t_{2}+j\choose j}}$$ be a characteristic polynomial associated with some finite system of generators $\{g_{1}, \dots, g_{p}\}$
of $M$. (We write $\phi_{M}(t_{1}, t_{2})$ in the form {\em (3.1)} with integer coefficients $a_{ij}, 1\leq i, j\leq n$.) Furthermore, let $\Lambda = \{(i, j)\in {\bf N}^2\,|\,0\leq i, j\leq n$ and $a_{ij}\neq 0\}$, and let $\mu=(\mu_{1}, \mu_{2})$ and $\nu=(\nu_{1}, \nu_{2})$ be the maximal
elements of the set $\Lambda$ relative to the lexicographic and reverse lexicographic orders on ${\bf N}^2$, respectively.
Then $d = deg\,\phi_{M}$, $a_{nn}$, $\mu$, $\nu$, the coefficients $a_{mn}$, $a_{\mu_{1}, \mu_{2}}$ $a_{\nu_{1}, \nu_{2}}$ of the polynomial  $\phi_{M}(t_{1}, t_{2})$, and the coefficients of all terms of $\phi_{M}(t_{1}, t_{2})$of total degree $d$  do not depend on the finite system
of generators of the $A_{n}(K)$-module $M$ this polynomial is associated with.
\end{theorem}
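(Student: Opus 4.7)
The plan is to compare the given characteristic polynomial $\phi_{M}(t_{1},t_{2})$ arising from $\{g_{1},\dots,g_{p}\}$ with the characteristic polynomial $\phi'_{M}(t_{1},t_{2})$ arising from a second finite system of generators $\{g'_{1},\dots,g'_{q}\}$ of $M$, and to extract each claimed invariant from the symmetric sandwich of polynomial inequalities this comparison produces.

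\textbf{Step 1 (sandwich between the two bifiltrations).} Since $M=\bigcup_{r,s}M_{rs}=\bigcup_{r,s}M'_{rs}$, there exist $a,b,a',b'\in\mathbf{N}$ such that every $g'_{j}\in M_{a,b}$ and every $g_{i}\in M'_{a',b'}$. Combined with the rule $W_{kl}M_{rs}\subseteq M_{r+k,s+l}$ from Example 5.2, this gives $M'_{rs}\subseteq M_{r+a,s+b}$ and $M_{rs}\subseteq M'_{r+a',s+b'}$ for all $r,s\in\mathbf{N}$. Taking $K$-dimensions yields, for all sufficiently large $r,s$,
\[
\phi_{M}(r,s)\le\phi'_{M}(r+a',s+b'),\qquad\phi'_{M}(r,s)\le\phi_{M}(r+a,s+b).
\]

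\textbf{Step 2 (degree $d$ and all coefficients on the top total-degree diagonal).} Let $P,P'$ denote the total-degree-$d$ homogeneous parts of $\phi_{M},\phi'_{M}$. I would restrict both inequalities to a ray $(r,s)=(\lambda t,\mu t)$ with positive integers $\lambda,\mu$; each side becomes a univariate polynomial in $t$ whose leading behavior is governed by $P(\lambda,\mu)$, respectively $P'(\lambda,\mu)$. Comparing leading coefficients in $t$ and using the sandwich symmetrically forces $\deg\phi_{M}=\deg\phi'_{M}=d$ and then $P(\lambda,\mu)=P'(\lambda,\mu)$ on every positive integer point, hence $P\equiv P'$. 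Because the change of basis from $\{\binom{t_{1}+i}{i}\binom{t_{2}+j}{j}\}$ to $\{t_{1}^{i}t_{2}^{j}\}$ is upper-triangular with nonzero diagonal entries $1/(i!j!)$, this translates into invariance of every coefficient $a_{ij}$ with $i+j=d$. In particular $a_{nn}$ is invariant: either $d=2n$ and $a_{nn}$ lies on the top-degree diagonal, or $d<2n$ and $a_{nn}=0$ in both representations.

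\textbf{Step 3 (lex leader $\mu$ and $a_{\mu_{1},\mu_{2}}$; then $\nu$ and $a_{\nu_{1},\nu_{2}}$ by symmetry).} Since $\binom{t_{1}+i}{i}$ has $t_{1}$-degree $i$, one has
\[
[t_{1}^{\mu_{1}}]\phi_{M}(t_{1},t_{2})=\frac{1}{\mu_{1}!}\sum_{j=0}^{n}a_{\mu_{1},j}\binom{t_{2}+j}{j},
\]
a non-zero polynomial in $t_{2}$ of degree exactly $\mu_{2}$ whose leading coefficient $a_{\mu_{1},\mu_{2}}/(\mu_{1}!\mu_{2}!)$ is strictly positive (else $\phi_{M}(r,s)=\dim_{K}M_{rs}$ would be eventually negative). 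Thus $\mu_{1}$ is the $t_{1}$-degree of $\phi_{M}$. The key technical lemma I would invoke is: if $F(r,s)=\sum_{k}r^{k}Q_{k}(s)\in\mathbf{Q}[r,s]$ is non-negative for all large $r,s$ and $k_{0}$ is the largest $k$ with $Q_{k}\not\equiv 0$, then $Q_{k_{0}}(s)\ge 0$ for all large $s$ (fix $s$ large enough that $Q_{k_{0}}(s)\ne 0$ and let $r\to\infty$). Applied to $F=\phi'_{M}(r+a',s+b')-\phi_{M}(r,s)$, comparison of $r$-degrees gives $\mu_{1}\le\mu'_{1}$, with equality by the reverse sandwich; applied once more to the leading $r^{\mu_{1}}$-coefficient viewed as a polynomial in $s$, it gives $\mu_{2}\le\mu'_{2}$ and, when $\mu_{2}=\mu'_{2}$, the scalar inequality $a'_{\mu_{1},\mu_{2}}\ge a_{\mu_{1},\mu_{2}}$. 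The reverse sandwich promotes all these to equalities, yielding invariance of $\mu=(\mu_{1},\mu_{2})$ and $a_{\mu_{1},\mu_{2}}$. Interchanging the roles of $t_{1}$ and $t_{2}$ throughout gives the same for $\nu$ and $a_{\nu_{1},\nu_{2}}$.

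\textbf{Main obstacle.} The delicate point is the passage, via the lemma above, from a bivariate polynomial inequality to a scalar inequality between integer coefficients $a_{\mu_{1},\mu_{2}}$ and $a'_{\mu_{1},\mu_{2}}$. The lemma must be invoked twice in succession—first to extract a valid inequality between leading $r$-coefficients (as polynomials in $s$), then to extract an inequality between their leading $s$-coefficients—and one must carefully track that each extracted leading coefficient is genuinely positive, which is where positivity of $a_{\mu_{1},\mu_{2}}$ coming from $\phi_{M}\ge 0$ enters. Once this is in place, the symmetric sandwich converts every comparison into an equality and the theorem follows without any further combinatorial analysis of the binomial basis.
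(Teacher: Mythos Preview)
Your argument is correct and shares the paper's core strategy: establish the mutual sandwich $\phi_{M}(r,s)\le\phi'_{M}(r+a',s+b')$ and $\phi'_{M}(r,s)\le\phi_{M}(r+a,s+b)$, then read off the invariants from asymptotic comparisons. The technical tools you deploy for the extraction, however, differ from the paper's. For the lex-maximal pair $\mu$, the paper substitutes $(t_{1},t_{2})=(s^{e},s)$ and observes that $\phi_{M}(s^{e},s)$ is a univariate polynomial in $s$ of degree $e\mu_{1}+\mu_{2}$; the sandwich then forces $e\mu_{1}+\mu_{2}=e\sigma_{1}+\sigma_{2}$ for all large $e$, hence $\mu=\sigma$ and equality of leading coefficients in one stroke. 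You instead iterate a ``highest coefficient is eventually non-negative'' lemma, first in $r$ and then in $s$, which is equally valid but requires the auxiliary observation that $a_{\mu_{1},\mu_{2}}>0$ (which you correctly derive from $\phi_{M}\ge 0$). For the top total-degree part the paper merely says ``similarly,'' whereas your ray substitution $(r,s)=(\lambda t,\mu t)$ gives an explicit and clean argument that $P(\lambda,\mu)=P'(\lambda,\mu)$ on all positive integer points, hence $P\equiv P'$; this is arguably more transparent than what the paper sketches. Both routes are short and elementary; the paper's $(s^{e},s)$ trick trades your two-step extraction for a single substitution, while your approach avoids having to think about why varying $e$ separates the two components of $\mu$.
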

\begin{proof}
Let $\{h_{1}, \dots, h_{q}\}$ be another finite system of generators of the $A_{n}(K)$-module $M$ and let $(M_{rs})_{r, s\in {\bf Z}}$ and
$(M'_{rs})_{r, s\in {\bf Z}}$ be the the natural bifiltrations associated with the systems of generators $\{g_{1}, \dots, g_{p}\}$ and
$\{h_{1}, \dots, h_{q}\}$, respectively. ($M_{rs} = \D\sum_{i=1}^{p}W_{rs}g_{i}$ and $M'_{rs} = \D\sum_{i=1}^{q}W_{rs}h_{i}$ for $r, s\in {\bf N}$,
$M_{rs} = 0$ and $M'_{rs} = 0$, if at least one of the indices $r, s$ is negative.) Furthermore, let $\phi^{\ast}_{M}(t_{1}, t_{2}) = \D\sum_{i=0}^{n}\D\sum_{j=0}^{n}{b_{ij}{t_{1}+i\choose i}{t_{2}+j\choose j}}$ ($b_{ij}\in {\bf Z}$ for $i=0,\dots, n; j= 0,\dots, n$) be the characteristic polynomial associated with the system $\{h_{1}, \dots, h_{q}\}$, let $\Lambda' = \{(i, j)\in {\bf N}^2\,|\,0\leq i, j\leq n$, and $b_{ij}\neq 0\}$, and  let $\sigma = (\sigma_{1}, \sigma_{2})$, $\epsilon =(\epsilon_{1}, \epsilon_{2})$ be the maximal elements of $\Lambda'$ relative to the lexicographic and reverse lexicographic orders on ${\bf N}^2$, respectively. In order to prove the theorem, we should show that $\mu = \sigma$, $\nu = \epsilon$, $a_{nn} = b_{nn}$, $a_{\mu_{1}\mu_{2}} = b_{\sigma_{1}\sigma_{2}}$, and $a_{\nu_{1}\nu_{2}} = b_{\epsilon_{1}\epsilon_{2}}$.

Since $\bigcup_{r, s\in {\bf {Z}}}M_{rs} = \bigcup_{r, s\in {\bf {Z}}}M'_{rs} = M$, there exist elements $r_{0}, s_{0}\in {\bf N}$ such that
$M_{rs}\subseteq M'_{r+r_{0}, s+s_{0}}$ and $M'_{rs}\subseteq M_{r+r_{0}, s+s_{0}}$ for all $r, s\in {\bf N}$. It follows that $\phi_{M}(r, s)\leq \phi^{\ast}_{M}(r+r_{0}, s+s_{0})$ and $\phi^{\ast}_{M}(r, s)\leq \phi(r+r_{0}, s+s_{0})$ for all sufficiently large $r, s\in {\bf N}$, say for all $r\geq r_{1}, s\geq s_{1}$ where $r_{1}$ and $s_{1}$ are some positive integers. Therefore, $deg\,\phi_{M} = deg\,\phi^{\ast}_{M}$, 
$a_{nn} = (n!)^{2}\D\lim_{r\rightarrow \infty, s\rightarrow \infty} \frac{\phi_{M}(r, s)}{r^{m}s^{n}}\leq (n!)^{2}\D\lim_{r\rightarrow \infty, s\rightarrow \infty}\D\frac{\phi^{\ast}_{M}(r+r_{0}, s+s_{0})}{r^{n}s^{n}} = (n!)^{2}\D\lim_{r\rightarrow \infty, s\rightarrow \infty}
\D\frac{\phi^{\ast}_{M}(r, s)}{r^{n}s^{n}} = b_{nn}$
and similarly $b_{nn}\leq a_{nn}$, so that $b_{nn} = a_{nn}$.

\medskip

If $a_{nn}\neq 0$, then $(n, n)\in \Lambda$ and $(n, n)\in \Lambda'$ hence $\mu = \nu = \sigma = \epsilon = (n, n)$ and
$a_{\mu_{1}\mu_{2}} = a_{\nu_{1}\nu_{2}} = b_{\sigma_{1}\sigma_{2}} = b_{\epsilon_{1}\epsilon_{2}} = a_{nn} = b_{nn}$. \, Suppose that
$a_{nn} = 0$. Then $(\mu_{1}, \mu_{2})\neq (n, n)$, $a_{\mu_{1}\mu_{2}}\neq 0$, and the coefficient of the monomial
$t_{1}^{\mu_{1}}t_{2}^{\mu_{2}}$ in the polynomial $\phi_{M}(t_{1}, t_{2})$ is equal to $\D\frac{a_{\mu_{1}\mu_{2}}}{\mu_{1}!\mu_{2}!}$. \,

\medskip

Let $s\in {\bf N}$, $s\geq s_{1}$, and let $e$ be a positive integer such that $s^{e}\geq r_{1}$. By the choice of the elements $\mu$ and $\sigma$,
$\phi_{M}(s^{e}, s) = \D\frac{a_{\mu_{1}\mu_{2}}}{\mu_{1}!\mu_{2}!}s^{e\mu_{1}+\mu_{2}} + o(s^{e\mu_{1}+\mu_{2}})$ and 
$\phi^{\ast}_{M}(s^{e}, s) = \D\frac{b_{\sigma_{1}\sigma_{2}}}{\sigma_{1}!\sigma_{2}!}s^{e\sigma_{1}+\sigma_{2}} + o(s^{e\sigma_{1}+\sigma_{2}})$
for all sufficiently large values of $e$.\,
Since $\phi_{M}(s^{e}, s)\leq \phi^{\ast}(s^{e}+p, s+q) = \D\frac{b_{\sigma_{1}\sigma_{2}}}{\sigma_{1}!\sigma_{2}!}s^{e\sigma_{1}+\sigma_{2}} + o(s^{e\sigma_{1}+\sigma_{2}})$ and
$\phi^{\ast}_{M}(s^{e}, s)\leq \phi(s^{e}+p, s+q) = \D\frac{a_{\mu_{1}\mu_{2}}}{\mu_{1}!\mu_{2}!}s^{e\mu_{1}+\mu_{2}} + o(s^{e\mu_{1}+\mu_{2}})$
for all $s\geq s_{0}$, we conclude that $e\mu_{1} + \mu_{2} = e\sigma_{1} + \sigma_{2}$ for all sufficiently large $e\in {\bf N}$ and the coefficients of the power $s^{e\mu_{1} + \mu_{2}}$ in the polynomials $\phi_{M}(t_{1}, t_{2})$ and $\phi^{\ast}_{M}(t_{1}, t_{2})$ are equal. Therefore,
$\mu_{1} = \sigma_{1}$, $\mu_{2} = \sigma_{2}$ and $a_{\mu_{1}\mu_{2}}= b_{\mu{1}\mu_{2}}$. \, The equalities  $\nu_{1} = \epsilon_{1}$,
$\nu_{2} = \epsilon_{2}$ and $a_{\nu_{1}\nu_{2}}= b_{\nu_{1}\nu_{2}}$, as well as the equality of coefficients of total degree $d$ of  can be proved similarly.
\end{proof}
It is clear that if $(W_{r})_{r\in {\bf Z}}$ is the one-dimensional
filtration of the Weyl algebra $A_{n}(K)$ introduced in Section 2, then
$W_{r}\subseteq D_{rr}\subseteq W_{2r}$ for all $r\in {\bf N}$.
Therefore, if $\phi_{M}(t_{1}, t_{2})$ and $\psi_{M}(t)$ denote,
respectively, our characteristic polynomial and the Bernstein polynomial
associated with the same finite system of generators of an
$A_{n}(M)$-module $M$, then
$\psi_{M}(r)\leq \phi_{M}(r, r)\leq \psi_{M}(2r)$ for all sufficiently
large $r\in {\bf Z}$. It follows that
$n\leq deg\,\psi_{M}(t) = deg\,\phi_{M}(t_{1}, t_{2})\leq 2n$ and
$M\in \mathcal{B}_{n}$ if and only if $deg\,\phi_{M}(t_{1}, t_{2}) = n$.

\medskip

The following example shows that a characteristic polynomial
$\phi_{M}(t_{1}, t_{2})$ of a finitely generated $A_{n}(K)$-module
$M$ can carry more invariants (i. e., numbers that do not depend on the
choice of a system of generators the characteristic polynomial is
associated with) than the Bernstein polynomial $\psi_{M}(t)$.

\begin{example}
{\em With the notation of Theorem 5.4, let an $A_{1}(K)$-module
$M$ be generated by a single element $f$ that satisfies the defining
equation\,
\begin{equation}
x^{a}\partial^{b}f + \partial^{a+b}f = 0 
\end{equation}
where $a$ and $b$ are some positive integers. In other words, $M$ is a factor module of a free $A_{1}(K)$-module $E = A_{1}(K)e$ with a free generator
$e$ by its $A_{1}(K)$-submodule $N = A_{1}(K)g$ where $g = (x^{a} + \partial^{2} + x\partial)e$. Obviously, $\{g\}$ is an $(x, \partial)$-Gr\"obner basis of the module $N$. Since $u_{g} = x^{a}\partial^{b}e$ and  $v_{g}=\partial^{a+b}e$, we obtain (using the notation of Theorem 5.4) that $\omega(t_{1}, t_{2}) = \omega_{\{(a, b)\}}(t_{1}, t_{2}) = \D{{t_{1}+1}\choose 1}{{t_{2}+1}\choose 1} -
\D{{t_{1}+1-a}\choose 1}\D{{t_{2}+1-b}\choose 1} =bt_{1} + at_{2} + a + b - ab$.
Furthermore, formula (5.14) shows that $\bar{\omega}(t_{1}, t_{2}) =
\D{{t_{1}+1-a}\choose 1}[\D{{t_{2}+1-b}\choose 1} - \D{{t_{2}+1-(a+b)}\choose 1}] =
at_{1} + a(1-a)$. Thus, the characteristic polynomial of the module $M$
associated with the generator $f$ is as follows:
$\phi_{M}(t_{1}, t_{2}) = \omega(t_{1}, t_{2}) + \bar{\omega}(t_{1}, t_{2})
= (a+b)t_{1} + at_{2} + 2a + b - ab - a^{2}$.

The Bernstein polynomial $\psi_{M}(t)$ associated with the
generator $f$ of the
$A_{1}(K)$-module $M$ can be obtained from the exact sequence of
finitely generated filtered modules
\[ 0 \longrightarrow
F^{a+b}\xrightarrow{\text{$\alpha$}}E\xrightarrow{\text{$\pi$}}
M\longrightarrow 0 \]
where $M$ and $E$ are equipped, respectively, with the filtrations
$(W_{r}e)_{r\in {\bf Z}}$ and $(W_{r}f)_{r\in {\bf Z}}$ defined in
Section 2, and $F^{a+b}$ is a free filtered $A_{1}(K)$-module with a
single free generator $h$ and filtration $(W_{r-(a+b)}h)_{r\in {\bf Z}}$.
(Here $\pi$ denotes the natural $A_{1}(K)$-epimorphism of $E$ onto $M$ that
maps $e$ onto $f$, and $\alpha$ is the natural $A_{1}(K)$-epimorphism
of the free filtered module $F^{a+b}$ onto the $A_{1}(K)$-module
$N\subseteq M$ equipped with the filtration $(W_{r}g)_{r\in {\bf Z}}$, \,
$\alpha(h) = g$.)

Since $dim_{K}W_{r} = Card\,\{x^{i}\partial^{j} | i + j\leq r\}
= \D{r+2\choose 2}$ for all $r\in {\bf N}$,

\noindent$dim_{K}(W_{r-(a+b)}h) = \D{r+2-(a+b)\choose 2}$ for all sufficiently
large $r\in {\bf Z}$ whence

$\psi_{M}(r) = dim_{K}(W_{r}e) - dim_{K}(W_{r-(a+b)}h) =
\D{r+2\choose2}-\D{r+2-(a+b)\choose2}$ for all
sufficiently large $r\in {\bf Z}$. Therefore,
$\psi_{M}(t) =
\D{t+2\choose2}-\D{t+2-(a+b)\choose2} = (a+b)t - \D\frac{(a+b)(a+b-3)}{2}\,.$

Comparing the polynomials $\psi_{M}(t)$ and
$\phi_{M}(t_{1}, t_{2})$
we see that the first polynomial carries two
invariants, its degree 1 and the leading coefficient $a+b$, while
$\phi_{M}(t_{1}, t_{2})$ carries three such
invariants, its total degree 1, $a+b$, and $a$.
Thus, $\phi_{M}(t_{1}, t_{2})$
gives both parameters $a$ and $b$ of the equation (5.2) while
the Bernstein polynomial $\psi_{M}(t)$ gives just the sum of the parameters.}
\end{example}

\section*{Acknowledgements}

The first author's research was partially supported by the Austrian Science Fund (FWF): W1214-N15, project DK11 and project no. P20336-N18 (DIFFOP) as well as the Austrian Marshall Plan Foundation: scholarship no. 256 420 24 7 2011.

\medskip

The second author's research was supported by the NSF Grant CCF 1016608

\bigskip

\centerline{\large R\normalsize EFERENCES}

\medskip

\noindent[{\bf 1}] \, T. Becker  and V. Weispfenning,
{\em Gr\"obner Bases. A Computational Approach to Commutative Algebra\/}
(Springer-Verlag, Berlin, Heidelberg, New York, 1993).

\noindent[{\bf 2}] \, I. N. Bernstein, `Modules over the ring of
differential operators. A study of the fundamental solutions
of equations with constant coefficients',
{\em Funct. Anal. and its Appl.\/} Vol. 5 (1971), 89 - 101.

\noindent[{\bf 3}] \, I. N. Bernstein, `The analytic continuation of
generalized functions with respect to a parameter',
{\em Funct. Anal. and its Appl.\/} Vol. 6 (1972), 273 - 285.

\noindent[{\bf 4}] \, J.- E. Bj\"ork, {\em Rings of Differential Operators\/}
(North Holland Publishing Company, Amsterdam, New York,  1979).

\noindent[{\bf 5}] \, P. J. Cameron,
{\em Combinatorics. Topics, Techniques, Algorithms\/}
(Cambridge University Press, Cambridge, 1994).

\noindent[{\bf 6}] \, D. Eisenbud, {\em Commutative Algebra with a View
Toward Algebraic Geometry\/} (Springer-Verlag, Berlin, Heidelberg,
New York, 1995).

\noindent[{\bf 7}] \, M. Insa and F.Pauer, `Gr\"obner Bases in Rings of
Differential Operators', {\em Gr\"obner Bases and Applications\/}
(Cambridge Univ. Press, New York, 1998), 367 - 380.

\noindent[{\bf 8}] \, E. R. Kolchin, {\em Differential Algebra and Algebraic
Groups\/} (Academic Press, New York, 1973).

\noindent[{\bf 9}] \, M. V. Kondrateva, A. B. Levin, A. V. Mikhalev, and
E. V. Pankratev,
`Computation of Dimension Polynomials', {\em Internat. J. Algebra
and Comput\/} Vol. 2 (1992), 117 - 137.

\noindent[{\bf 10}] \, M. V. Kondrateva, A. B. Levin, A. V. Mikhalev, and
E. V. Pankratev,
{\em Differential and Difference Dimension Polynomials\/}
(Kluwer Academic Publishers, Dordrecht, Boston, London, 1999).

\end{document}